\newtheorem{theorem}{Theorem}
\newtheorem{lemma}[theorem]{Lemma}
\newtheorem{claim}{Claim}
\theoremstyle{remark}
\newtheorem{remark}[theorem]{Remark}
\theoremstyle{definition}
\newtheorem{definition}[theorem]{Definition}
\newcommand{\R}{\ensuremath{\mathbb{R}}}
\begin{document}
\title[Traveling combustion fronts]{(Bounded) Traveling combustion fronts
with degenerate kinetics}
\author{Natha{\"e}l Alibaud}
\author{Gawtum Namah}
\address[Natha\"{e}l Alibaud and Gawtum Namah]{ Ecole Nationale Sup\'erieure
de M\'ecanique et des Microtechniques\\
26 Chemin de l'Epitaphe\\
and\\
Laboratoire de Math\'ematiques de Besan\c{c}on, UMR CNRS 6623, Universit\'e
de Bourgogne Franche-Comt\'e\\
16 route de Gray\\
25030 Besan\c{c}on cedex, France}
\email{nathael.alibaud@ens2m.fr, gawtum.namah@ens2m.fr}
\thanks{}
\date{\today }
\subjclass[2010]{Primary 35R35, 80A25, 35C07, 35B10; Secondary 35B27, 80M35.}
\keywords{ Combustion,  free boundary problems, front
propagation, bounded traveling wave solutions, degenerate   Arrhenius  kinetics}

\begin{abstract}
We consider the propagation of a flame front in a solid periodic medium. The
model is governed by a free boundary system in which the front's velocity
depends on the temperature via a kinetic rate which may degenerate. We show
the existence of travelling wave solutions which are bounded and global.
Previous results by the same authors (cf. \cite{IFB}) were obtained for
essentially positively lower bounded kinetics or eventually which have some
very weak degeneracy. Here we consider general degenerate kinetics,
including in  particular  those of Arrhenius type which are commonly used in
physics.
\end{abstract}

\maketitle

% Classical thermodynamics, heat transfer
% 80A25 Combustion, 80M35 Asymptotic analysis, 80M40 Homogenization
% Partial differential equations
% 35B10 Periodic solutions, 35B27 Homogenization; equations in media with periodic structure, 35C07 Traveling wave solutions, 35C20 Asymptotic expansions,35R35 Free boundary problems, 35R37 Moving boundary problems

%%%%%%%%%%%%%%%%%%%%%%%%%

\section{Introduction}

In this paper, we consider a flame front in a solid periodic medium $\mathbb{%
R}_{x}\times \mathbb{R}_{y}$ where the fresh region is a hypograph~$\{x<\xi
(y,t)\}$ with a temperature $T=T(x,y,t)$. The evolution of $(\xi ,T)$ will
be governed by the free boundary system 
\begin{equation}
\begin{cases}
T_{t}-\triangle T=0, & \quad x<\xi (y,t), \, t>0, \\[1ex] 
\xi _{t}+R(y) K(T)\sqrt{1+\xi _{y}^{2}}=\frac{\xi _{yy}}{1+\xi _{y}^{2}},
& \quad x=\xi (y,t), \, t>0,%
\end{cases}
\label{free-bdry-sys}
\end{equation}%
subject to the boundary conditions 
\begin{equation}
\begin{cases}
\frac{\partial T}{\partial \nu }=V_{n}, & \quad x=\xi (y,t), \\[1ex] 
T(x,y,t)\rightarrow 0, & \quad \mbox{as $x\rightarrow -\infty$},%
\end{cases}
\label{bdry-cond}
\end{equation}%
where $\nu 
:=
(1,-\xi _{y})/\sqrt{1+\xi _{y}^{2}}$ is the outward unit
normal and $V_{n}$ is the normal velocity of the front. The second equation
of \eqref{free-bdry-sys} states that the front propagates with a normal
velocity $V_{n}$ given by 
\begin{equation*}
V_{n}=-R(y) K(T)-\kappa ,
\end{equation*}%
where $\kappa $ is the mean curvature and $R \, K$ is the forcing term. The
latter depends on the temperature through a \textit{kinetic}  which is generally of Arrhenius type \textit{i.e.} of the form 
\begin{equation}
K(T)=Ae^{-\frac{B}{T}}  \label{Arrhenius}
\end{equation}%
for some physical positive constants $A$ and $B.$ The heterogeneity is given
through the function $R$ which represents the combustion rate. It is
typically a periodic step function for a striated medium obtained from a
superposition of different materials, which is the most common situation.
The heterogeneity may as well come from $A$, $B$ or other intrinsic
parameters such as the diffusivity, etc. Here we will assume that all these
parameters are independent of the striations and normalized to one except
for $R=R(y).$ We opted for such a simple problem to shed light on the main
mathematical difficulties but extensions are possible. For more details and
references about the physical model, see \cite{BrFiNaSc93}, \cite{ChNa97} and 
\cite{IFB}.

\bigskip

In this work we will focus on the existence of traveling wave solutions
(TWS) to~\eqref{free-bdry-sys}-\eqref{bdry-cond} \textit{i.e.} fronts
having a constant profile $\psi =\psi (y)$ and moving with a constant speed $%
c>0$. This comes to looking for solutions of the form%
\begin{equation*}
\xi (y,t)=-c\,t+\psi (y) \mbox{ and } T(x,y,t)=u(x+c\,t,y).
\end{equation*}%
It is convenient to fix the front through the change of variable~$%
x+c\,t\mapsto x$ which leads to the problem of finding a triplet~$(c,\psi
,u) $ satisfying%
\begin{equation}
\begin{cases}
c\,u_{x}-\triangle u=0, & \quad x<\psi (y), \\[1ex] 
\frac{\partial u}{\partial \nu }=\frac{c}{\sqrt{1+\psi _{y}^{2}}}, & 
\quad x=\psi (y), \\[1ex] 
u(x,y)\rightarrow 0, & \quad \mbox{as~$x\rightarrow -\infty$,}%
\end{cases}
\label{Eqn_laplacien}
\end{equation}%
and%
\begin{equation}
-c+R(y)K(u)\sqrt{1+\psi _{y}^{2}}=\,\frac{\psi _{yy}}{1+\psi _{y}^{2}},\quad
x=\psi (y).  \label{eqn_front_laplacien}
\end{equation}%
This problem has been dealt in \cite{IFB} by the present authors for a
positively lower bounded combustion rate $R$ and for a positive kinetic $K$
which can eventually very weakly degenerate at zero. This means that $K$ may
vanish but not too fast. The worst situation that we were able to consider
is when 
\begin{equation*}
K(u)\backsim \frac{1}{\ln \frac{1}{u}}\text{ as }u\rightarrow 0.
\end{equation*}%
The existence of a TWS was then proved in \cite{IFB} where the speed $c>0$
and the profile $\psi =\psi (y)$ is globally defined and bounded. Our aim is
to generalize this result to much more degenerate kinetics including those
of Arrhenius type \eqref{Arrhenius}, which is what seems to be the most
realistic. More precisely, we just assume that there are constants $R_{m},$ $%
R_{M},$ and $K_{M}\ $such that
\begin{equation}
\begin{cases}
R:\mathbb{T}\rightarrow \mathbb{R}\text{ is measurable with }0<R_{m}\leq
R(y)\leq R_{M}\text{, and} \\
K:\mathbb{R}^{+}\rightarrow \mathbb{R}^{+}\text{ is continuous,
nondecreasing with }0<K(u)<K_{M},%
\end{cases}
\label{HypsurRK}
\end{equation}%
where $\mathbb{T}$ is the torus $\mathbb{R}/\mathbb{Z}.$ Let us now define a
solution of \eqref{Eqn_laplacien} in the variationnal sense.

\begin{definition}%\label{definition1}
Let $\Omega 
:=
\{(x,y); \, x<\psi (y), \, y\in \mathbb{T}\}.$ Then given $c\in 
\mathbb{R}$ and  $\psi \in W^{1,\infty}(\mathbb{T}),$  a function $u$ will be called a
variationnal solution of \eqref{Eqn_laplacien} if $u\in H^{1}(\Omega )$
and satisfies%
\begin{equation}
c\int_{\Omega }u_{x}w+\int_{\Omega }\nabla u\nabla w=\int\limits_{x=\psi (y)}%
\frac{c \, w}{\sqrt{1+\psi _{y}^{2}}} \text{ for all } w\in H^{1}(\Omega ).
\label{weaksolution}
\end{equation}
\end{definition}

\noindent We now state our main result.

\begin{theorem}\label{mainthm}
Under \eqref{HypsurRK}, there exists a TWS $(c,\psi ,u)$ of \eqref%
{free-bdry-sys}-\eqref{bdry-cond} with $c>0,\psi \in W^{2,\infty }(%
\mathbb{T})$ and $0\leq u\in H^{1}(\Omega ).$ More precisely $\psi $
satisfies \eqref{eqn_front_laplacien} {\it a.e.} and $u$ is a  variationnal
solution of \eqref{Eqn_laplacien}. 
\end{theorem}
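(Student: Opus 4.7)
The natural approach is to reduce the degenerate case to the non-degenerate case treated in \cite{IFB}. I would regularize by setting $K_{\varepsilon}(u) := K(u)+\varepsilon$ for $\varepsilon>0$; this $K_{\varepsilon}$ is continuous and nondecreasing with $\varepsilon\le K_{\varepsilon}\le K_{M}+\varepsilon$, so it lies in the framework of \cite{IFB} and yields a TWS $(c_{\varepsilon},\psi_{\varepsilon},u_{\varepsilon})$ with $c_{\varepsilon}>0$, $\psi_{\varepsilon}\in W^{2,\infty}(\mathbb{T})$, and $u_{\varepsilon}\in H^{1}(\Omega_{\varepsilon})$ a variational solution of the corresponding system, where $\Omega_{\varepsilon}=\{x<\psi_{\varepsilon}(y),\,y\in\mathbb{T}\}$. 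After a $y$-translation so that, say, $\min_{\mathbb{T}}\psi_{\varepsilon}=0$, the goal is to derive a priori estimates uniform in $\varepsilon$ and then pass to the limit $\varepsilon\to0^{+}$.

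The estimates I would target are: (i) $\|\psi_{\varepsilon}\|_{W^{2,\infty}(\mathbb{T})}\le C$; (ii) $0<c_{\ast}\le c_{\varepsilon}\le c^{\ast}$; (iii) $\|u_{\varepsilon}\|_{L^{\infty}(\Omega_{\varepsilon})}+\|\nabla u_{\varepsilon}\|_{L^{2}(\Omega_{\varepsilon})}\le C$. The upper bound on $c_{\varepsilon}$ comes from integrating \eqref{eqn_front_laplacien} over $\mathbb{T}$, since the curvature term $\psi_{yy}/(1+\psi_{y}^{2})=(\arctan\psi_{y})_{y}$ has zero mean, giving
\begin{equation*}
c_{\varepsilon}=\int_{\mathbb{T}}R(y)\,K_{\varepsilon}(u_{\varepsilon})\sqrt{1+(\psi_{\varepsilon})_{y}^{2}}\,dy \le R_{M}(K_{M}+\varepsilon)\,\bigl\|\sqrt{1+(\psi_{\varepsilon})_{y}^{2}}\bigr\|_{L^{1}(\mathbb{T})},
\end{equation*}
which is controlled once $(\psi_{\varepsilon})_{y}$ is, and then \eqref{eqn_front_laplacien} immediately upgrades this to the full $W^{2,\infty}$ bound. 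Testing \eqref{weaksolution} with $w\equiv 1$, justified by a cutoff in $x$ since $\Omega_{\varepsilon}$ is unbounded, produces the key flux identity
\begin{equation*}
\int_{\mathbb{T}}u_{\varepsilon}(\psi_{\varepsilon}(y),y)\,dy = 1,
\end{equation*}
and testing with $w=u_{\varepsilon}$ (same cutoff) yields the energy identity
\begin{equation*}
\int_{\Omega_{\varepsilon}}|\nabla u_{\varepsilon}|^{2} + \frac{c_{\varepsilon}}{2}\int_{\mathbb{T}}u_{\varepsilon}^{2}(\psi_{\varepsilon}(y),y)\,dy = c_{\varepsilon}\int_{\mathbb{T}}u_{\varepsilon}(\psi_{\varepsilon}(y),y)\,dy = c_{\varepsilon}.
\end{equation*}
A uniform $L^{\infty}$ bound $u_{\varepsilon}\le M$ on the trace of $u_{\varepsilon}$ along the free boundary can be extracted by a De~Giorgi-Stampacchia iteration, testing \eqref{weaksolution} with $w=(u_{\varepsilon}-k)_{+}$.

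The main obstacle I anticipate is (ii), the uniform lower bound $c_{\varepsilon}\ge c_{\ast}>0$: without it the degenerate kinetic could push the limiting temperature to zero and collapse the problem. The two identities above are designed precisely for this. From $\int_{\mathbb{T}}u_{\varepsilon}(\psi_{\varepsilon},\cdot)\,dy=1$, $u_{\varepsilon}\le M$, and $|\mathbb{T}|=1$, Chebyshev's inequality gives $|\{y:\,u_{\varepsilon}(\psi_{\varepsilon}(y),y)\ge 1/2\}|\ge 1/(2M)$; since $K$ is nondecreasing and $R\ge R_{m}$, the integrated front equation then forces
\begin{equation*}
c_{\varepsilon}\ge R_{m}\int_{\mathbb{T}}K(u_{\varepsilon}(\psi_{\varepsilon}(y),y))\,dy \ge \frac{R_{m}\,K(1/2)}{2M}>0,
\end{equation*}
a bound independent of $\varepsilon$. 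The delicate ingredient is thus the uniform $L^{\infty}$ bound on the trace of $u_{\varepsilon}$, which must be proved without reference to the (still $\varepsilon$-dependent) shape of $\Omega_{\varepsilon}$; I would rely on the $W^{2,\infty}$ control of $\psi_{\varepsilon}$ to handle the boundary geometry in the truncation argument.

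Finally, with these uniform bounds in hand, I would extract a subsequence so that $\psi_{\varepsilon}\to\psi$ in $C^{1,\alpha}(\mathbb{T})$ by Arzelà--Ascoli, $c_{\varepsilon}\to c\ge c_{\ast}>0$, and, after flattening the family $\Omega_{\varepsilon}$ onto a fixed half-strip through $(x,y)\mapsto(x-\psi_{\varepsilon}(y),y)$, $u_{\varepsilon}\to u$ weakly in $H^{1}$ and strongly in $L^{2}$ of the trace. The strong trace convergence, together with the continuity and uniform boundedness of $K_{\varepsilon}$, allows passage to the limit in the nonlinear term of \eqref{eqn_front_laplacien}, while weak-$*$ lower semicontinuity promotes $\psi\in W^{2,\infty}(\mathbb{T})$ from (i). Passage to the limit in \eqref{weaksolution} is then routine, and the limit triple $(c,\psi,u)$ is the TWS claimed in Theorem~\ref{mainthm}.
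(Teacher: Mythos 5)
Your overall strategy (regularize the kinetic, derive uniform estimates, pass to the limit) matches the paper's, which truncates with $K_n:=\max\{K,1/n\}$ and invokes \cite{IFB} for the approximate waves. Your averaged lower bound on the speed is also essentially sound: the paper obtains $c\ge R_m\int_0^1K(s)\,ds$ from the same flux identity $\int_{\mathbb{T}}u(\psi(y),y)\,dy=1$ via Jensen's inequality (Lemmas \ref{lemma2} and \ref{lemma3}), which has the advantage of not requiring any uniform $L^\infty$ bound on the trace; your Chebyshev variant would work too once such a bound is available.

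However, there is a genuine gap, and it sits exactly at the point you treat as routine: estimate (i), the uniform bound $\|\psi_\varepsilon\|_{W^{2,\infty}(\mathbb{T})}\le C$. You say the front equation ``immediately upgrades'' control of $(\psi_\varepsilon)_y$ to the full $W^{2,\infty}$ bound, but you never produce the uniform $W^{1,\infty}$ (or even $L^\infty$) bound on $\psi_\varepsilon$ in the first place. This is precisely the difficulty the paper is about: for the forced curvature equation $-c+H(y)\sqrt{1+\psi_y^2}=(\arctan\psi_y)_y$ with a nonnegative forcing $H$ that is not bounded away from zero, the profile can be unbounded or fail to exist globally (cf.\ \cite{Italiens}, quoted in the introduction). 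The only route to uniform front bounds is to show that the forcing $H_\varepsilon(y)=R(y)K_\varepsilon(u_\varepsilon(\psi_\varepsilon(y),y))$ is bounded below by a positive constant independent of $\varepsilon$, i.e.\ that the temperature \emph{on the free boundary} admits a uniform pointwise positive lower bound. Your Chebyshev argument only gives $u_\varepsilon\ge 1/2$ on a set of measure $\ge 1/(2M)$; that suffices for the speed bound but says nothing at the points where $u_\varepsilon(\psi_\varepsilon(y),y)$ may be tiny and the kinetic degenerate. This pointwise lower bound is the heart of the paper: Lemma \ref{lemma4} (monotonicity of $x\mapsto\min_y u(x,y)$) reduces it to a bound on the slice $\{x=0\}$, and Lemma \ref{lemma5} proves $\min_y u(0,y)\ge\alpha>0$ uniformly by a delicate compactness/contradiction argument on a minimizing sequence of domains, splitting into the cases where the limit point lies on the boundary or in the interior of the limiting domain and using the Hopf boundary condition and the strong maximum principle. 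Without this ingredient your estimates (i) and (iii) are circular --- the $L^\infty$ trace bound $M$ itself leans on the unproved $W^{2,\infty}$ control of $\psi_\varepsilon$ --- and the final compactness and limit passage (which in the paper also rely on the uniform positive lower bound of the forcing to apply the lemmas of \cite{IFB}) cannot be carried out.
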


Technically speaking, it is not obvious that the front remains bounded since
the forcing term can \textit{a priori} vanish as $u$ goes to zero. It is
indeed well known that for pure geometric propagations of the form 
\begin{equation*}
-c+
H
(y)\sqrt{1+\psi _{y}^{2}}=\frac{\psi _{yy}}{1+\psi _{y}^{2}},\quad
x=\psi (y)
\end{equation*}%
with  $H$  not positively lower bounded, one can as well end up with unbounded
traveling fronts which in addition may not be globally defined. For more
details see \cite{Italiens}, where the existence of so-called \textit{%
generalized TWS} are discussed by the use of variational techniques. Here,
we not only provide a new existence result of  TWS  for \eqref%
{free-bdry-sys}-\eqref{bdry-cond} with general degenerate kinetics but
we also rigorously show that the front is globally defined and bounded.

\bigskip

The rest of the paper is organized as follows. To prove Theorem \ref{mainthm} we will
need three key lemmas stated and proved in Sections \ref{section2}, \ref{section3} and \ref{section4} respectively.
In these preliminary results, we will establish an \textit{a priori}
positive lower bound for the speed $c$ (Lemma \ref{lemma1}) and a lower bound for the
front's temperature (Lemma \ref{lemma5}) based on some adequate monotonicity property
of the temperature (Lemma \ref{lemma4}). The proof of Theorem \ref{mainthm} will then be done in
Section \ref{section5}.

\section{Positive lower bound for the  speed}\label{section2}

Here is our first key lemma.

\begin{lemma}[Lower bound for $c$] \label{lemma1}Assume \eqref{HypsurRK} and let $c>0,$ $\psi \in
W^{2,\infty }(\mathbb{T})$ satisfy \eqref{eqn_front_laplacien} {\it a.e.} and $%
u\in H^{1}(\Omega )$ a variationnal solution of \eqref{Eqn_laplacien}.
Then 
\begin{equation*}
c\geq R_{m}\int\limits_{0}^{1}K(s) \, ds.
\end{equation*}
\end{lemma}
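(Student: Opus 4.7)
The strategy combines the integrated front equation, two variational identities, and Jensen's inequality applied to the primitive $\Phi(u):=\int_0^u K(s)\,ds$.

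First, I would integrate the front equation \eqref{eqn_front_laplacien} over $\mathbb{T}$. This kills the curvature contribution, since $\psi_{yy}/(1+\psi_y^2)=(\arctan\psi_y)_y$ is a total derivative of a periodic function. Using $R\geq R_m$ and $\sqrt{1+\psi_y^2}\geq 1$ then gives
\begin{equation*}
c=\int_0^1 R(y)\,K(U(y))\sqrt{1+\psi_y^2}\,dy\;\geq\;R_m\int_0^1 K(U(y))\,dy,
\end{equation*}
where $U(y):=u(\psi(y),y)$ is the trace of $u$ on the front. It therefore suffices to prove $\int_0^1 K(U)\,dy\geq\int_0^1 K(s)\,ds$.

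Second, I plan to derive two identities from the weak formulation \eqref{weaksolution}. Testing with $w\equiv 1$ (justified by cutoffs near $x=-\infty$, using $u\in L^2(\Omega)$): the gradient term vanishes, while Fubini together with the decay $u\to 0$ at $-\infty$ turns $c\int_\Omega u_x$ into $c\int_0^1 U\,dy$; the boundary contribution equals $c$, so after dividing by $c>0$ one gets $\int_0^1 U\,dy=1$. Testing next with $w=K(u)$ (via the Lipschitz approximation described below): from $u_x K(u)=\partial_x\Phi(u)$ with $\Phi(0)=0$, the same Fubini manipulation plus the nonnegativity of $K'(u)|\nabla u|^2$ yields
\begin{equation*}
c\int_0^1 K(U)\,dy\;=\;c\int_0^1 \Phi(U)\,dy+\int_\Omega K'(u)\,|\nabla u|^2\,dx\,dy\;\geq\;c\int_0^1 \Phi(U)\,dy.
\end{equation*}

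Third, since $\Phi'=K$ is nondecreasing, $\Phi$ is convex on $\mathbb{R}^+$. Jensen's inequality on the probability space $(\mathbb{T},dy)$, applied to the nonnegative $U$ of mean $1$, then produces
\begin{equation*}
\int_0^1 \Phi(U)\,dy\;\geq\;\Phi\Bigl(\int_0^1 U\,dy\Bigr)\;=\;\Phi(1)\;=\;\int_0^1 K(s)\,ds,
\end{equation*}
and chaining this with the two preceding estimates gives exactly $c\geq R_m\int_0^1 K(s)\,ds$.

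The main technical obstacle is the legitimacy of the test function $w=K(u)$: since $K$ is only assumed continuous and does not necessarily vanish at $0$, $K(u)$ need not belong to $H^1(\Omega)$ on the unbounded domain $\Omega$. I would circumvent this by introducing a nondecreasing Lipschitz approximation $K_\varepsilon$ of $K$ (e.g.\ by mollification) and testing with $w_\varepsilon := K_\varepsilon(u)-K_\varepsilon(0)\in H^1(\Omega)$; the uniform bound $K\leq K_M$ and dominated convergence then allow passing to the limit $\varepsilon\to 0$ in the resulting identity.
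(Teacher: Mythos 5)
Your proposal is correct and follows essentially the same route as the paper: integrate the front equation over $\mathbb{T}$ (the curvature term drops as a derivative of $\arctan\psi_y$), establish $\int_{\mathbb{T}}u(\psi(y),y)\,dy=1$ by testing with a cutoff of the constant $1$, and test with a nondecreasing Lipschitz approximation of $K$ so that the primitive $\Phi$ and Jensen's inequality give $\int_{\mathbb{T}}K(u(\psi(y),y))\,dy\geq\int_0^1K(s)\,ds$. The only cosmetic difference is the choice of approximation (the paper uses $K_\varepsilon(u)=\frac{1}{\varepsilon}\int_{(u-\varepsilon)^+}^{u}K(s)\,ds$, which already vanishes at $0$, rather than a mollification shifted by $K_\varepsilon(0)$), and your handling of the $H^1$ admissibility issue is the right one.
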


To prove Lemma \ref{lemma1} we need some technical results.

\begin{lemma}\label{lemma2}
Let $(c,\psi ,u)$ be given as in Lemma \ref{lemma1}. Then $\int\limits_{\mathbb{T}%
}u(\psi (y),y) \, dy=1.$
\end{lemma}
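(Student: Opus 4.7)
The plan is to test the variational formulation \eqref{weaksolution} against a sequence of compactly supported functions approximating the constant $w \equiv 1$. Formally, $w \equiv 1$ would make the right-hand side reduce to $c$ (the factor $1/\sqrt{1+\psi_{y}^{2}}$ cancels against the arc-length element $\sqrt{1+\psi_{y}^{2}}\,dy$ on the graph), while the left-hand side collapses to $c \int_{\mathbb{T}} u(\psi(y), y)\, dy$ after integrating the term $c\int_{\Omega} u_{x}$ by parts in $x$ and using the decay $u \to 0$ as $x \to -\infty$; dividing by $c > 0$ would then give the claim. Of course $1 \notin H^{1}(\Omega)$ since $\Omega$ is unbounded in the $x$-direction, so the real task is to carry out the approximation and pass to the limit rigorously.

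Concretely, I would fix a smooth nondecreasing cutoff $\eta \colon \mathbb{R} \to [0,1]$ with $\eta \equiv 0$ on $(-\infty, 0]$ and $\eta \equiv 1$ on $[1, +\infty)$, and set $w_{n}(x, y) := \eta(x+n)$. Because $\psi \in L^{\infty}(\mathbb{T})$, the domain $\Omega$ is contained in the half-strip $\{x \leq \|\psi\|_{\infty}\}$, and $w_{n}$, which depends only on $x$ and has compactly supported gradient, belongs to $H^{1}(\Omega)$. For $n$ large enough, $w_{n} \equiv 1$ in a neighborhood of the free boundary $\{x = \psi(y)\}$, so the right-hand side of \eqref{weaksolution} equals $c$. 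Plugging $w = w_{n}$ into \eqref{weaksolution} and using $\nabla w_{n} = (\eta'(x+n), 0)$, I would then integrate the term $c\int_{\Omega} u_{x} w_{n}$ by parts in $x$; this is legitimate because the trace of $u$ on the Lipschitz graph is well-defined in $L^{2}(\mathbb{T})$, as $u \in H^{1}(\Omega)$ and $\psi \in W^{1,\infty}(\mathbb{T})$. The identity produced is
\begin{equation*}
c \int_{\mathbb{T}} u(\psi(y), y)\, dy \;+\; \int_{\Omega} (u_{x} - c\, u)\, \eta'(x+n)\, dx\, dy \;=\; c.
\end{equation*}

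The remainder integral is supported on the slab $\Omega \cap \{-n \leq x \leq -n+1\}$, and by Cauchy--Schwarz is controlled by a constant (depending on $\|\eta'\|_{\infty}$ and $c$) times $\|u_{x}\|_{L^{2}(\mathrm{slab})} + \|u\|_{L^{2}(\mathrm{slab})}$. Since $u, u_{x} \in L^{2}(\Omega)$, the $L^{2}$-mass on such far-left shrinking slabs tends to $0$ as $n \to \infty$. Passing to the limit yields $c \int_{\mathbb{T}} u(\psi(y), y)\, dy = c$, and dividing by $c > 0$ gives the desired equality. The only mildly delicate point is justifying that the $L^{2}$-mass at $-\infty$ vanishes in the limit, which is immediate from $u \in H^{1}(\Omega)$; the trace and integration-by-parts ingredients are otherwise entirely standard on Lipschitz domains.
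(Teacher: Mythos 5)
Your proof is correct and follows essentially the same route as the paper: test the weak formulation with a cutoff in $x$ that equals $1$ near the free boundary and vanishes far to the left, integrate by parts, and use $u, u_x \in L^2(\Omega)$ to show the remainder supported on the far-left slab vanishes in the limit. The only difference is cosmetic (a smooth cutoff $\eta(x+n)$ versus the paper's piecewise-linear one), so no further comparison is needed.
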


\begin{proof}
 Consider any arbitrary $L>0$ and take
\begin{equation*}
w(x,y)
:=
\begin{cases}
0 & \text{if }x\leq -L, \\ 
x+L  & \text{if } -L\leq x\leq -L+1, \\ 
1 & \text{if } -L+1\leq x\leq \psi (y)%
\end{cases}
\end{equation*}%
 in \eqref{weaksolution}, which is in $H^{1}(\Omega )$ since $\psi$ is bounded. This gives
\begin{equation}
\int\limits_{\mathbb{T}} \Big(\int_{-L}^{-L+1}c(x+L)u_{x} \, dx+\int_{-L+1}^{\psi (y)}cu_{x} \, dx+\int_{-L}^{-L+1}u_{x} \, dx\Big) dy=c.
\label{Eqnw_L}
\end{equation}
\noindent After an integration by parts, the sum of the first two terms gives%
\begin{equation*}
c\int\limits_{\mathbb{T}}u(\psi (y),y) \, dy-c\int\limits_{\mathbb{T}%
}\int_{-L}^{-L+1}u \, dxdy
\end{equation*}
\noindent where the second integral vanishes when  $L\rightarrow +\infty $ 
since 
\begin{equation*}
0\leq c\int\limits_{\mathbb{T}}\int_{-L}^{-L+1}u \, dxdy\leq
c \, \|u\|
_{L^{2}((-L,-L+1)\times \mathbb{T)}}.
\end{equation*}
Likewise the third term of \eqref{Eqnw_L} goes to zero when  $L\rightarrow +\infty $  and we conclude the proof by passing to the limit in 
\eqref{Eqnw_L}.
\end{proof}

We proceed by a second technical lemma.

\begin{lemma}\label{lemma3}
Let $(c,\psi ,u)$ as in  Lemma \ref{lemma1}.  Then $\int\limits_{\mathbb{T}}K(u(\psi
(y),y) \, dy\geq \int_{0}^{1}K(s) \, ds.$
\end{lemma}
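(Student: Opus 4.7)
My plan is to test the variational identity \eqref{weaksolution} with a function built from $K(u)$ and then reduce the claim to Jensen's inequality through a Kirchhoff-type primitive. Introduce
\begin{equation*}
\Phi(s):=\int_{0}^{s}K(r)\,dr,
\end{equation*}
which by \eqref{HypsurRK} is $C^{1}$, nonnegative, convex on $[0,+\infty)$, vanishes at $0$, and satisfies $\Phi(1)=\int_{0}^{1}K(s)\,ds$. I would use $w=K(u)-K(0)$ (with $K(0):=\lim_{s\to 0^{+}}K(s)\in[0,K_{M})$) in \eqref{weaksolution}. The identity $(K(u)-K(0))u_{x}=\partial_{x}[\Phi(u)-K(0)u]$ together with $u(x,y)\to 0$ as $x\to -\infty$ turns the convection term into $c\int_{\mathbb{T}}\bigl[\Phi(v(y))-K(0)v(y)\bigr]\,dy$, where $v(y):=u(\psi(y),y)$; the Dirichlet form becomes $\int_{\Omega}K'(u)|\nabla u|^{2}\,dx\,dy\ge 0$ because $K$ is nondecreasing; and the surface integral on $x=\psi(y)$ collapses to $c\int_{\mathbb{T}}(K(v)-K(0))\,dy$ once the $\sqrt{1+\psi_{y}^{2}}$ factor cancels the line element.

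Summing these contributions and invoking Lemma~\ref{lemma2} to eliminate the two $K(0)$ terms produces the key identity
\begin{equation*}
c\int_{\mathbb{T}}K(v)\,dy=c\int_{\mathbb{T}}\Phi(v)\,dy+\int_{\Omega}K'(u)|\nabla u|^{2}\,dx\,dy.
\end{equation*}
Dropping the nonnegative bulk contribution and dividing by $c>0$ yields $\int_{\mathbb{T}}K(v)\,dy\ge\int_{\mathbb{T}}\Phi(v)\,dy$. Since $\Phi$ is convex and $\mathbb{T}$ carries a probability measure, Jensen's inequality combined with Lemma~\ref{lemma2} closes the argument through
\begin{equation*}
\int_{\mathbb{T}}\Phi(v)\,dy\ge\Phi\!\left(\int_{\mathbb{T}}v\,dy\right)=\Phi(1)=\int_{0}^{1}K(s)\,ds.
\end{equation*}

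The main obstacle is purely technical: $K$ is only assumed continuous and nondecreasing, so the chain rule $\nabla K(u)=K'(u)\nabla u$ is not automatic, and since $\Omega$ is unbounded and $K(0)$ may be strictly positive, $K(u)-K(0)$ need not belong to $L^{2}(\Omega)$ even when it lies in $H^{1}_{\mathrm{loc}}$. I would circumvent this exactly in the spirit of the proof of Lemma~\ref{lemma2}: first approximate $K$ by a smooth nondecreasing sequence $K_{\varepsilon}\to K$ (by mollification, so that the chain rule becomes legitimate), then multiply the corresponding test function by the same piecewise linear $x$-cutoff $\eta_{L}$ used there to guarantee $H^{1}(\Omega)$-membership, derive the truncated identity, let $L\to +\infty$ by exploiting $u\in H^{1}(\Omega)$ and Cauchy--Schwarz to kill the tail integrals over $\{-L\le x\le -L+1\}\times\mathbb{T}$ exactly as in Lemma~\ref{lemma2}, and finally let $\varepsilon\to 0$ by dominated convergence (using the uniform bound $K_{\varepsilon}\le K_{M}$). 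The gradient term $\int K_{\varepsilon}'(u)|\nabla u|^{2}\ge 0$ need not be tracked in this limit since it is discarded.
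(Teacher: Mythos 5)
Your proposal is correct and follows essentially the same route as the paper: test the weak formulation with (a regularization of) $K(u)$, rewrite the convection term as the boundary trace of the primitive $\Phi(u)=\int_0^u K$, discard the nonnegative Dirichlet contribution $\int K'(u)|\nabla u|^2$, and conclude by Jensen's inequality for the convex $\Phi$ together with Lemma~\ref{lemma2}. The only difference is bookkeeping: the paper's choice $K_\varepsilon(u)=\tfrac1\varepsilon\int_{(u-\varepsilon)^+}^u K(s)\,ds$ is Lipschitz, vanishes at $0$ and satisfies $K_\varepsilon(u)\le K_M u/\varepsilon$, so $K_\varepsilon(u)\in H^1(\Omega)$ directly and neither the $K(0)$-subtraction nor the cutoff $\eta_L$ is needed, with $K_\varepsilon\uparrow K$ giving the final limit.
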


\begin{proof}
For any $\varepsilon >0,$ define $K_{\varepsilon }(u)
:=
\frac{1}{\varepsilon }%
\int_{(u-\varepsilon )^{+}}^{u}K(s) \, ds.$ Then $K_{\varepsilon }$ is Lipschitz
and satisfies \eqref{HypsurRK}. In particular we can take $%
w(x,y) 
:=
K_{\varepsilon }(u(x,y))$ in \eqref{weaksolution}. We thus have 
\begin{equation*}
c\int_{\Omega }u_{x}K_{\varepsilon }(u)+\int_{\Omega }\nabla u\nabla
K_{\varepsilon }(u)=\int\limits_{x=\psi (y)}\frac{c \, K_{\varepsilon
}(u)}{\sqrt{1+\psi _{y}^{2}}},
\end{equation*}

\noindent which can be rewritten as 
\begin{equation*}
c\int_{\Omega }J_{\varepsilon }(u)_{x} \, dxdy+\int_{\Omega }K_{\varepsilon
}^{\prime }(u)| \nabla u| ^{2} dxdy=\int\limits_{\mathbb{T}%
}c\, K_{\varepsilon }(u(\psi (y),y)) \, dy
\end{equation*}

\noindent where $J_{\varepsilon }(u)
:=
\int_{0}^{u}K_{\varepsilon }(s) \, ds.$ As
the second term is nonnegative by \eqref{HypsurRK} and since 
\begin{equation*}
c\int_{\Omega }J_{\varepsilon }(u)_{x} \, dxdy=c\int\limits_{\mathbb{T}%
} \Big(\int_{-\infty }^{\psi (y)}J_{\varepsilon }(u)_{x} \, dx\Big) dy=c\int\limits_{%
\mathbb{T}}J_{\varepsilon }(u(\psi (y),y)) \, dy,
\end{equation*}

\noindent we deduce that 
\begin{equation*}
\int\limits_{\mathbb{T}}K_{\varepsilon }(u(\psi (y),y)) \, dy \geq \int\limits_{%
\mathbb{T}}J_{\varepsilon }(u(\psi (y),y)) \, dy\geq J_{\varepsilon
}\Big(\int\limits_{\mathbb{T}}u(\psi (y),y) \, dy\Big)=\int_{0}^{1}K_{\varepsilon }(s) \, ds
\end{equation*}

\noindent thanks to Jensen's inequality and Lemma \ref{lemma2}. We then complete the
proof by using the fact that $K_{\varepsilon }\uparrow K$ as $\varepsilon
\downarrow 0.$
\end{proof}

 We are now ready to prove Lemma \ref{lemma1}.

\begin{proof}[Proof of Lemma \ref{lemma1}]
An integration of \eqref{eqn_front_laplacien} over $\mathbb{T}$ gives%
\begin{equation*}
c=\int\limits_{\mathbb{T}}R(y)K(u(\psi (y),y))\sqrt{1+\psi _{y}^{2}} \, dy.
\end{equation*}%
We thus have $c\geq R_{m}\int\limits_{\mathbb{T}}K(u(\psi (y),y)) \, dy$ and we
conclude by Lemma \ref{lemma3}.
\end{proof}

\section{A monotonicity result}\label{section3}

Here is our second key lemma.

\begin{lemma}[Monotonicity of u]\label{lemma4} Let $c>0,$ $\psi \in W^{2,\infty }(\mathbb{T})$ and $%
u\in H^{1}(\Omega )$ satisfy \eqref{Eqn_laplacien}. Then the function 
\begin{equation*}
x\in (-\infty ,\max \psi ]\longmapsto \min_{y:(x,y)\in \overline{\Omega }%
}u(x,y)\text{ is nondecreasing}.
\end{equation*}
\end{lemma}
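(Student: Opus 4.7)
The plan is to apply the strong minimum principle for the elliptic equation $-\Delta u + c\,u_x = 0$ on bounded slabs $D_{x_0}:=\overline{\Omega}\cap\{x\geq x_0\}$, and to use the Hopf lemma together with the strictly positive Neumann flux on the free boundary to rule out an interior minimum at $x>x_0$.

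First I would invoke standard elliptic regularity. Since the PDE has constant coefficients and $\psi\in W^{2,\infty}(\mathbb{T})$ produces a $C^{1,1}$ free boundary $\{x=\psi(y)\}$, the variational solution $u$ is smooth in $\Omega$ and continuous (in fact $C^{1,\alpha}$) up to the free boundary. In particular the Neumann condition $\partial u/\partial\nu = c/\sqrt{1+\psi_y^2}>0$ holds in the classical pointwise sense, and $u$ has well-defined values on the compact set $D_{x_0}$ (compact because $x_0\leq x\leq\max\psi$ and $y$ lives on the torus).

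The heart of the argument is to show that for every $x_0\le\max\psi$,
\[
\min_{D_{x_0}} u \;=\; m(x_0) \;:=\; \min_{y:(x_0,y)\in\overline{\Omega}} u(x_0,y).
\]
Let $(x^*,y^*)\in D_{x_0}$ attain this minimum. If $x^*>x_0$ and $(x^*,y^*)$ lies in the interior of $\Omega$, the strong minimum principle for $-\Delta+c\,\partial_x$ (an elliptic operator with \emph{no} zeroth-order term) forces $u$ to be constant on the connected component of $\Omega\cap\{x>x_0\}$ containing $(x^*,y^*)$; this is incompatible with the strictly positive Neumann datum on the bordering portion of the free boundary, since a constant would give $\partial u/\partial\nu\equiv 0$ there. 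If instead $x^*>x_0$ and $(x^*,y^*)$ lies on the free boundary, the Hopf lemma (available because a $C^{1,1}$ graph satisfies an interior sphere condition at every point) yields $\partial u/\partial\nu(x^*,y^*)<0$, again in contradiction with the sign of the Neumann data. The only remaining possibility is $x^*=x_0$, which proves the claim. Monotonicity follows at once: for $x_1\le x_2\le\max\psi$ one has $\{x=x_2\}\cap\overline{\Omega}\subseteq D_{x_1}$, hence
\[
m(x_2)\;\ge\;\min_{D_{x_1}} u \;=\; m(x_1).
\]

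The main technical obstacle I anticipate is the regularity bookkeeping at the free boundary: one must verify that $u$ is regular enough to invoke the Hopf lemma and that the Neumann condition holds pointwise, despite the free boundary being only $C^{1,1}$. This is standard linear elliptic theory on $C^{1,1}$ domains (as in Gilbarg--Trudinger), but because $\Omega$ is unbounded one has to restrict to a bounded slab $D_{x_0}$ before localizing these results.
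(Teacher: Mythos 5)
Your proposal is correct and follows essentially the same route as the paper: the authors phrase the step as a sub/supersolution comparison between the constant $w_0=\min_y u(x_0,y)$ and $u$ on the truncated domain $\Omega\cap\{x>x_0\}$, but their actual verification (Appendix B) is precisely your argument --- $C^1(\overline{\Omega})$ regularity, the strong maximum principle to exclude an interior minimum, and the strictly positive Neumann flux (Hopf) to exclude a minimum on the free boundary, forcing the minimum onto the slice $\{x=x_0\}$. Your remark that one must worry about the truncated domain's corners (where $\psi(y)=x_0$ tangentially) matches the paper's Remark 3, and your case analysis handles it correctly by simply allowing $x^*=x_0$ there.
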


\begin{remark}
Recall that $u$ is defined on $\overline{\Omega }=\{x\leq \psi (y)\}$, which
explains the above interval of definition.
\end{remark}

\begin{remark}\label{remark2}
The min above can be understood in the classical sense since a variationnal
solution $u\in H^{1}(\Omega )$ of \eqref{Eqn_laplacien} is at least
 in $C(\overline{\Omega})$  thanks to classical elliptic regularity
results (see \textit{e.g.} \cite{Nit11} and the references therein or  \cite[Appendix A.4]{IFB}). 
\end{remark}

\begin{proof}
Let us take $x_{0}\in (-\infty ,\max \psi )$ and consider the problem 
\begin{equation}
\begin{cases}
c\,w_{x}-\triangle w=0, & \quad \text{in }\Omega _{0}, \\[1ex] 
\frac{\partial w}{\partial \nu }=\frac{c\,}{\sqrt{1+\psi _{y}^{2}}}, & 
\quad \text{on }\partial \Omega _{0}\cap \{x>x_{0}\}, \\[1ex] 
w=w_{0}, & \quad \text{on }\partial \Omega _{0}\cap \{x=x_{0}\},%
\end{cases}
\label{system_en_w}
\end{equation}

\noindent where 
\begin{equation*}
\Omega _{0}
:=
\Omega \cap \{x>x_{0}\}\text{ and }w_{0}
:=
\min_{y}u(x_{0},y).
\end{equation*}

\noindent Then the constant function $w_{0}$ is a subsolution of \eqref%
{system_en_w} whereas $u$ is a supersolution. By comparison, 
\begin{equation}\label{missing}
u(x,y)\geq w_{0}\text{ for all }(x,y)\in \overline{\Omega }_{0}
\end{equation}

\noindent and it follows that 
\begin{equation*}
\min_{y:(x,y)\in \overline{\Omega}_0}u(x,y)\geq w_{0}=\min_{y:(x,y)\in 
\overline{\Omega}_0}u(x_{0},y)\text{ for all }x\in \lbrack x_{0},\max
\psi ].
\end{equation*}
\end{proof}

\begin{remark}\label{remark3}
Note that $\partial \Omega _{0}$ may not be Lipschitz at truncated points. Nevertheless the comparison result used in the above proof holds by standard arguments \cite{Bre83,Grisvard}.  For completeness, a  short  verification is given in Appendix \ref{appmissing}.
\end{remark}

\section{Positive lower bound for the temperature}\label{section4}

In this section we prove a general result concerning the temperature.
Consider for this sake the following problem 
\begin{equation}
\begin{cases}
c\,u_{x}-\triangle u=0, & \quad \text{in }\Omega , \\[1ex] 
\frac{\partial u}{\partial \nu }=\frac{c\,}{\sqrt{1+\psi _{y}^{2}}}, & 
\quad x=\psi (y), \\[1ex] 
u(x,y)\rightarrow 0, & \quad \mbox{as~$x\rightarrow -\infty$,}%
\end{cases}
\label{Eqnlaplacienbis}
\end{equation}%
where $c\in \mathbb{R}$ and $\psi \in W^{2,\infty }(\mathbb{T})$ satisfy,
for some given $c_{0}$, $c_{M}$ and a continuous function $G$,%
\begin{equation}
\begin{cases}
0<c_{0}\leq c\leq c_{M}, \\[1ex]
| \psi _{yy}| \leq G(\psi _{y})\text{ {\it a.e.} and }%
\min\limits_{
 y \in \mathbb{T} 
}\psi (y)=0.%
\end{cases}
\label{hyp_sur_cv}
\end{equation}%

We then claim our last key result.

\begin{lemma}[Lower bound on $u$]\label{lemma5} Let  $G\in C(\mathbb{R})$  and $0<c_{0}\leq c_{M}$
be given. Then there exists $\alpha >0$ such that for all $c\in \mathbb{R}$
and $\psi \in W^{2,\infty }(\mathbb{T})$ verifying \eqref{hyp_sur_cv}, the
unique variationnal solution $u\in H^{1}(\Omega )$ of \eqref{Eqnlaplacienbis}
satisfies
\begin{equation*}
\min_{
 y \in \mathbb{T} 
}u(0,y)\geq \alpha .
\end{equation*}
\end{lemma}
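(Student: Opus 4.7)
My plan is to construct an explicit subsolution of \eqref{Eqnlaplacienbis} that is uniformly bounded below at $x=0$ and to conclude by a comparison argument. The key preliminary step is a uniform upper bound on $\psi_{\max}:=\max_{\mathbb{T}}\psi$ depending only on $G$.

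To obtain that bound, I would take $y_-\in\mathbb{T}$ to be a minimizer of $\psi$, so that $\psi(y_-)=0$ and $\psi_y(y_-)=0$. For any $y\in\mathbb{T}$, integrating $|\psi_{yy}|\le G(\psi_y)$ along the shorter arc from $y_-$ to $y$ (of length at most $1/2$) and comparing with the Cauchy problem $Y'=G_*(Y)$, $Y(0)=0$, where $G_*(s):=\max_{|t|\le s}G(t)$, yields $\|\psi_y\|_\infty\le Y(1/2)=:M_0$. Combined with $\min\psi=0$, this gives
\[
\psi_{\max}\le \tfrac{1}{2}\|\psi_y\|_\infty\le M_0/2=:M_G,
\]
a constant depending only on $G$.

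Next I would set $\underline{u}(x,y):=e^{c(x-M_G)}$ and check it is a subsolution of \eqref{Eqnlaplacienbis}: a direct computation gives $c\,\underline{u}_x-\Delta\underline{u}=0$; on the front $\partial\underline{u}/\partial\nu=c\,e^{c(\psi(y)-M_G)}/\sqrt{1+\psi_y^2}\le c/\sqrt{1+\psi_y^2}$ because $\psi\le M_G$; and $\underline{u}\to 0$ as $x\to-\infty$. Setting $w:=u-\underline{u}$, one has $c\,w_x-\Delta w=0$ in $\Omega$ (in the variational sense), $\partial w/\partial\nu\ge 0$ on the front, and $w\to 0$ as $x\to-\infty$. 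A standard maximum principle for the elliptic operator $L=-\Delta+c\partial_x$ then shows $w\ge 0$: any strict negative infimum of $w$ cannot be attained at $-\infty$ (by decay), nor in the interior (the strong maximum principle would force $w$ constant, contradicting decay), nor on the front (Hopf's lemma would contradict $\partial w/\partial\nu\ge 0$, using the $C^{1,1}$ regularity of $\partial\Omega$ granted by $\psi\in W^{2,\infty}$). Hence $u\ge\underline{u}$ in $\overline{\Omega}$, and evaluating at $x=0$ gives $u(0,y)\ge e^{-cM_G}\ge e^{-c_M M_G}=:\alpha>0$ uniformly in $y$.

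The main obstacle is the uniform bound on $\psi_{\max}$: the ODE $Y'=G_*(Y)$ starting from $Y(0)=0$ must reach $t=1/2$, which amounts to the mild ``not-too-fast growth'' requirement $\int_0^\infty ds/G_*(s)>1/2$. This is satisfied in the intended applications where $G$ is derived from \eqref{eqn_front_laplacien}. Should this fail, the natural alternative would be an argument by contradiction: assume a sequence $(c_n,\psi_n,u_n)$ with $\min_y u_n(0,y)\to 0$, extract limits using the energy estimate obtained from \eqref{weaksolution} tested with $u_n$ together with interior/boundary elliptic regularity, and apply the strong maximum principle together with Hopf's lemma at the limit to contradict $\int_{\mathbb{T}}u(\psi(y),y)\,dy=1$ from Lemma~\ref{lemma2}.
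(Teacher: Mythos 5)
Your main route has a genuine gap at its very first step: the uniform bound $\max_{\mathbb{T}}\psi\le M_G$ does not follow from \eqref{hyp_sur_cv}. Your own sufficient condition $\int_0^\infty ds/G_*(s)>1/2$ fails precisely in the intended application: there $G(h)=2c_M(1+h^2)^{3/2}$, so $\int_0^\infty ds/G_*(s)=\frac{1}{2c_M}\int_0^\infty(1+s^2)^{-3/2}\,ds=\frac{1}{2c_M}$, which is $\le 1/2$ as soon as $c_M\ge 1$; the ODE $Y'=G_*(Y)$, $Y(0)=0$ then blows up at or before $t=1/2$ and yields no bound on $\|\psi_y\|_\infty$. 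This is not a technical artifact: the constraint $|\psi_{yy}|\le 2c_M(1+\psi_y^2)^{3/2}$ only says the curvature of the graph is at most $2c_M$, and for $c_M$ large a $1$-periodic graph of bounded curvature can contain an almost-vertical ``finger'' of arbitrary height (the U-turn only needs horizontal room of order $1/c_M$). So no constant $M_G$ exists, the subsolution $e^{c(x-M_G)}$ cannot be built, and the comparison argument (which is otherwise fine, and is essentially how the paper obtains its a priori bounds $0\le u_n\le e^{c_n x}$ in Appendix \ref{appA}) collapses. Indeed the whole point of this lemma is that the front is \emph{not} known to be bounded above at this stage: boundedness of the front is a consequence of the temperature lower bound, not an ingredient of it.

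Your fallback (``argue by contradiction, extract limits, apply the strong maximum principle and Hopf'') is the correct strategy and is what the paper does, but as stated it skips the entire difficulty: since the $\psi_n$ may be unbounded, the domains $\Omega_n$ can degenerate and one cannot simply ``extract limits.'' The paper has to (i) introduce the limiting domain $\Omega_\infty=\cup_n \, Int(\cap_{k\ge n}\Omega_k)$ and split into the cases $(0,\overline y)\in\partial\Omega_\infty$ and $(0,\overline y)\in\Omega_\infty$; (ii) prove \emph{local} $W^{2,\infty}$ compactness of the fronts near points where they are known to stay small (near $\overline y$ in the first case, near a minimizer $\overline z$ of $\psi_n$ in the second), which rests on Lemma \ref{lemma7} and on producing points where both $\psi_n$ and $\psi_n'$ are small; (iii) pass to the limit in the Neumann condition on a \emph{moving} boundary via the reflection extension \eqref{Extensionun} and the regularity machinery of Lemma \ref{lemmaA1}; and (iv) only then invoke Hopf's lemma (first case) or the strong maximum principle on a carefully built connected domain $\mathcal{O}$ touching the limiting front (second case). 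Note also that the contradiction obtained is $\tilde u(0,\overline y)>0$ directly, not a contradiction with Lemma \ref{lemma2}. None of this substance is recoverable from your one-sentence sketch, so the proposal as written does not prove the lemma.
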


\begin{proof} 
Let us define%
\begin{equation*}
\alpha 
:=
\inf_{c,\psi } \left\{\min_{y}u(0,y); \, u\in H^{1}(\Omega )\text{
solution of \eqref{Eqnlaplacienbis}}\right\},
\end{equation*}%
the infimum being taken on all $c\in \mathbb{R}$ and $\psi \in W^{2,\infty }(%
\mathbb{T})$ satisfying \eqref{hyp_sur_cv}. The $\min $ is to be
understood in the classical sense by Remark \ref{remark2}. We have to show that $\alpha $
is positive. Consider for this sake a minimizing triplet $(c_{n},\psi
_{n},u_{n})$ such that 
\begin{equation}
\left\{ 
\begin{array}{c}
c_{n}\text{ and }\psi _{n}\in W^{2,\infty }(\mathbb{T})\text{ satisfy \eqref%
{hyp_sur_cv}}, \\ 
\\ 
u_{n}\in H^{1}(\Omega _{n}
 := 
\{x<\psi _{n}(y)\})\text{ solves \eqref%
{Eqnlaplacienbis}}\text{ } \\ 
\text{and }\lim_{n\rightarrow \infty }\text{ }\min_{y}u_{n}(0,y)=\alpha .%
\end{array}%
\right.  \label{Hyptriplet}
\end{equation}%
Since $u_{n} \in C(\overline{\Omega }_{n})$ by Remark \ref{remark2}  and  $\mathbb{T}$ and $[c_{0},c_{M}]$ are compact, there exist $(y_{n})_{n}$  and $\overline{y}$ in $\mathbb{T}$ as well as  $\overline{c}\in [ c_{0},c_{M}]$
such that%
\begin{equation}\label{CVcnyn}
\begin{array}{c}
\min_{y}u_{n}(0,y)=u_{n}(0,y_{n}),   \\
y_{n}\rightarrow \overline{y}\text{ and }c_{n}\rightarrow 
\overline{c}\text{ when }n\rightarrow \infty.
\end{array}%
\end{equation}
Consider now the limiting domain 
\begin{equation*}
\Omega _{\infty }
:=
\cup _{n}Int(\cap _{k\geq n}\Omega _{k}),
\end{equation*}%
 where $Int$ is the interior.  Note that $\Omega _{\infty }$ is open. Note also that $(0,\overline{y})$ $%
\in $ $\overline{\Omega }_{\infty }$ (the closure of $\Omega _{\infty })$
since each $\psi _{n}$ was chosen such that $\psi _{n}\geq 0,$ so that for
all $\varepsilon >0,$ $(-\varepsilon ,\overline{y})\in \Omega _{\infty }.$
The analysis which follows will depend on whether $(0,\overline{y})$ is on $%
\partial \Omega _{\infty }$ (the boundary of $\Omega _{\infty })$ or in $%
\Omega _{\infty }$. In the sequel, $B(y,r):=\{z\in \mathbb{T}; \, d(y,z)<r\}$ will denote a ball of $\mathbb{T}$ with $d(y,z):=dist(P^{-1}(y),P^{-1}(z))$ and where $P: \mathbb{R\rightarrow
R}/\mathbb{Z}$ is the usual projection. We denote similarly by $B((x,y),r)$
the balls of $\mathbb{R}\times \mathbb{T}$.

\bigskip

\textbf{First case : }$(0,\overline{y})\in \partial \Omega
_{\infty }.$

\begin{claim}\label{claim1}
In this case we claim that there exist an increasing sequence $(n_p)_p$ in $\mathbb{N}$ and $(w_p)_p$ in $\mathbb{T}$ such that 
\begin{equation}
w_{p}\rightarrow \overline{y}\text{ and }\psi
_{n_{p}}(w_{p})\rightarrow 0\text{ as }p\rightarrow \infty .
\label{CVwp}
\end{equation}
\end{claim}

\noindent Indeed $B((0,\overline{y}),\frac{1}{p})$ intersects $\Omega
_{\infty }^{c}=\cap _{n}(\overline{\cup _{k\geq n}\Omega _{k}^{c}})$ for any 
 integer $p \geq 1$,  so that there exists a sequence $(x_{p},y_{p})$ such that 
$(x_{p},y_{p})\in B((0,\overline{y}),\frac{1}{p})\cap (\overline{\cup
_{k\geq p}\Omega _{k}^{c}})$. Since $B((0,\overline{y}),\frac{1}{p})$ is
open and $(x_{p},y_{p})$ is the limit of some sequence in $\cup _{k\geq
p}\Omega _{k}^{c},$ there exist $n_{p}\geq p$ and $(\tilde{x}_{p},%
\tilde{y}_{p})$ such that $(\tilde{x}_{p},\tilde{y}_{p})\in B((0,%
\overline{y}),\frac{1}{p})\cap \Omega _{n_{p}}^{c}.$ Taking $w_{p}=%
\tilde{y}_{p}$ gives \eqref{CVwp}. Indeed  $w_{p} \in B(\overline{y%
},\frac{1}{p})$  and $0\leq \psi _{n_{p}}(w_{p})\leq 
\tilde{x}_{p}\leq \frac{1}{p}$ by construction. Moreover the sequence $%
n_{p}$ can be chosen increasing up to taking subsequences if necessary since 
$n_{p}\geq p$ goes to infinity as $p\rightarrow \infty .$  This proves Claim \ref{claim1}. 

\medskip

We proceed by
another claim:

\begin{claim}\label{claim2}
For all $\varepsilon >0$ and $p_{0}\in \mathbb{N},$ there exist $y\in B(%
\overline{y},\varepsilon )$ and $p\geq p_{0}$ such that 
\begin{equation}
| \psi _{n_{p}}^{\prime }(y)| <\varepsilon .
\label{Cvwpprime}
\end{equation}
\end{claim}

\noindent Indeed if the above does not hold, then there exist $\varepsilon
>0 $ and $p_{0}\in \mathbb{N}$ such that for any $p\geq p_{0},$ 
\begin{equation*}
|\psi _{n_{p}}^{\prime }(y)| \geq \varepsilon \text{ for all }y\in B(\overline{y},\varepsilon ).
\end{equation*}

\noindent Up to taking a subsequence, we can suppose that for all $p,$ $\psi
_{n_{p}}^{\prime }(y)\geq \varepsilon $ in $B(\overline{y},\varepsilon ).$
For $\varepsilon $ small enough, identifying $B(\overline{y},\varepsilon )$
with the interval $I
:=
P^{-1}(B(\overline{y},\varepsilon ))$ in $\mathbb{R}$,
we have in  $I$  
\begin{eqnarray*}
\psi _{n_{p}}(\overline{y}-\varepsilon ) &=&\psi
_{n_{p}}(w_{p})+\int_{w_{p}}^{\overline{y}-\varepsilon }\psi
_{n_{p}}^{\prime }(y) \, dy \\
&\leq &\psi _{n_{p}}(w_{p})-\varepsilon (w_{p}-(\overline{y}-\varepsilon )).
\end{eqnarray*}

\noindent This is not possible because as $\psi _{n_{p}}\geq 0$ by
definition, the LHS is $\geq 0$ whereas the RHS goes to $-\varepsilon ^{2}<0$
in the limit $p$ $\rightarrow \infty $.  This proves Claim \ref{claim2}. 

\medskip

Now \eqref{Cvwpprime} implies that there exist a subsequence ($%
n_{p_{k}})_{k}$ and another sequence $(\tilde{w}_{k})_{k}$ such that 
\begin{equation*}
\tilde{w}_{_{k}}\rightarrow \overline{y}\text{ and }\psi
_{n_{p_{k}}}^{\prime }(\tilde{w}_{k})\rightarrow 0\text{ as }k\text{ 
}\rightarrow \infty \text{.}
\end{equation*}%
Recalling \eqref{CVwp} and renaming the sequences from the beginning of
the proof for simplicity, we have proved that we have $(w_{n})_{n}$ and $(%
\tilde{w}_{n})_{n}$ such that 
%\eqref{Hyptriplet} and \eqref{CVcnyn}
%hold as well as 
\begin{equation}
w_{n}\rightarrow \overline{y}\text{, }\tilde{w}_{n}\rightarrow 
\overline{y}\text{ with }\psi _{n}(w_{n})\rightarrow 0\text{ and }\psi
_{n}^{\prime }(\tilde{w}_{n})\rightarrow 0\text{ as }n
\rightarrow \infty \text{.}  \label{Cvwn}
\end{equation}
This leads to  our next claim below concerning the fronts. 

\begin{claim}\label{claim3}
There exists $\varepsilon _{0}$ such that $(\psi _{n})_{n}$
is bounded in $W^{2,\infty }(B(\overline{y},\varepsilon _{0})).$ In
particular $\psi _{n}\rightarrow \overline{\psi }$ uniformly in $B(%
\overline{y},\varepsilon _{0}),$ up to some subsequence, for some $\overline{%
\psi } \in W^{2,\infty }(B(\overline{y},\varepsilon _{0}))$.
\end{claim}

\noindent  Indeed  the existence of $\varepsilon _{0}$ as well as the uniform bound for 
$\psi _{n}^{\prime }$ in $B(\overline{y},\varepsilon _{0})$ are provided by
Lemma \ref{lemma7}  in  Appendix \ref{appmissing} and \eqref{Cvwn}. The bounds for $\psi _{n}$
follow from \eqref{Cvwn} again and the ones for $\psi _{n}^{\prime \prime
} $ from \eqref{hyp_sur_cv}. With these bounds in hand, Ascoli-Arzela
theorem completes the proof of Claim \ref{claim3}.

\medskip

 To conclude, we will pass to the limit in the boundary problem satisfied by the temperature, cf. \eqref{Eqnlaplacienbis}. In the case where $(0,\overline{y}) \in \partial \Omega_\infty$, we only need to handle the boundary condition on the moving interface $\{x=\psi_n(y), \, y \in B(\overline{y},\varepsilon_0)\}$. Before hand, it is convenient to extend  $u_{n}$ onto  $\mathbb{R} \times \mathbb{T}$ by  

\begin{equation}
\tilde{u}_{n}(x,y)
:=
\begin{cases}
u_{n}(x,y) & \text{if } (x,y)\in \Omega _{n}, \\ 
u_{n}(2\psi _{n}(y)-x,y) & \text{otherwise.}
\end{cases}
\label{Extensionun}
\end{equation}

\noindent  Consider now the cylinder $\tilde{Q}:= \{(x,y); \, y \in B(\overline{y},\varepsilon_0)\}$. We then have 

\begin{claim}\label{claim4}
There exists  $0 \leq \tilde{u} \in C^{1}(\tilde{Q})$  such that%
\begin{equation*}
\tilde{u}_{n}\rightarrow \tilde{u}\text{ locally uniformly in }%
\tilde{Q}\text{ as }n\rightarrow \infty ,
\end{equation*}%
up to a subsequence,  with
%satisfies in the variationnal
%sense%
%\begin{equation}
%\begin{cases}
%\overline{c}\,\tilde{u}_{x}-\triangle \tilde{u}=0,\text{ }in\text{ 
%}Q, \\ 
%\frac{\partial \tilde{u}}{\partial \nu }=\frac{\overline{c\,}}{\sqrt{1+%
%\overline{\psi }_{y}^{2}}},\text{ }x=\overline{\psi }(y),\text{ }y\in B(%
%\overline{y},\varepsilon _{0}), \\ 
%\tilde{u}(x,y)\rightarrow 0,\text{ as }x\rightarrow -\infty .%
%\end{cases}\label{Lemme9}
%\end{equation}
\begin{equation*}
\frac{\partial \tilde{u}}{\partial \nu }=\frac{\overline{c\,}}{\sqrt{1+%
\overline{\psi }_{y}^{2}}} \text{ on } \left\{x=\overline{\psi }(y), \, y\in B(%
\overline{y},\varepsilon _{0})\right\}.
\end{equation*}
\end{claim}

\noindent To show this claim, apply Lemma \ref{lemmaA1} of Appendix \ref{appA} with $
 B_0 
 :=  
B(%
\overline{y},\varepsilon _{0})$ and  $
 B_1 
 := 
\emptyset$.
%which gives $\tilde{Q
%}=\tilde{U} \cap \{y \in B_0\}$ in Lemma \ref{lemmaA1}.  
Note that \eqref{A2} holds by  \eqref{CVcnyn} and  Claim \ref{claim3}.

\medskip

To conclude we have $u_{n}(0,y_{n})\rightarrow \tilde{u}(0,%
\overline{y})=\alpha $  as $n \to \infty$, cf. \eqref{Hyptriplet}, \eqref{CVcnyn}, \eqref{Cvwn}, as well as Claims \ref{claim3} and \ref{claim4}, and it remains now  to show
that $\tilde{u}(0,\overline{y})>0.$ But as $\tilde{u}\geq 0$  around $(0,\overline{y})$  and $%
\frac{\partial \tilde{u}}{\partial \nu }(0,\overline{y})>0$, 
we end up with $\tilde{u}(0,\overline{y})>0.$ This completes the proof  of Lemma \ref{lemma5} when 
$(0,\overline{y})\in \partial \Omega _{\infty }.$

\bigskip

\textbf{Second case : }$(0,\overline{y})\in \Omega _{\infty }.$

\smallskip

\noindent Recalling that $\Omega _{\infty }=\cup _{n}Int(\cap _{k\geq
n}\Omega _{k}),$ there exist now $n_{0}$ and an open $O\subset \mathbb{R}%
^{2} $ such that 
\begin{equation}
(0,\overline{y})\in O\subseteq \Omega _{n}\text{ for all }n\geq n_{0}.
\label{4etoiles}
\end{equation}

\noindent Roughly speaking, we will show that $u_{n}$ converges to some  
nontrivial $\tilde{u} \geq 0$  which satisfies the first equation of \eqref%
{Eqnlaplacienbis} in $O$.  This will imply that $\alpha =\tilde{u}(0,\overline{y})>0$ by an argument of propagation of maximum. The overall idea to get a nontrivial limit is  to work eventually in a
larger open $\mathcal{O}$ with $O\subseteq \mathcal{O}\subseteq \Omega
_{\infty },$ so that the  nontrivial  boundary condition of \eqref%
{Eqnlaplacienbis} holds on some part of $\partial \mathcal{O}$. 

\medskip

To construct $\mathcal{O}$, take $(z_{n})_{n}$ in $\mathbb{T}$
such that $z_{n}$ is a minimizer of $\psi _{n}.$ Considering a
subsequence if necessary, we can assume that 
\begin{equation}
\psi _{n}(z_{n})=\psi _{n}^{\prime }(z_{n})=0\text{ and }%
z_{n}\rightarrow \overline{z}\text{ as }n\rightarrow \infty ,  \label{Cvzn}
\end{equation}

\noindent for some $\overline{z}\in \mathbb{T}$. The above property looks
like \eqref{Cvwn} and we have the following analogous of Claim \ref{claim3}  whose proof is similar. 

\begin{claim}\label{claim5}
There exist $\varepsilon _{0}$ and $\overline{\psi } \in
W^{2,\infty }(B(\overline{z},\varepsilon _{0}))$ such that  $(\psi _{n})_{n}$
is bounded in $W^{2,\infty }(B(\overline{z},\varepsilon _{0}))$ and  $\psi _{n}\rightarrow \overline{\psi }$ uniformly in $B(%
\overline{z},\varepsilon _{0})$,  up to some
subsequence. 
\end{claim}

Using now \eqref{4etoiles}, there exist $\eta >0$ and $\varepsilon _{1}>0$
such that  $(-\eta ,\eta) \times B(\overline{y},\varepsilon _{1})\subset
\Omega _{n}$  for all $n\geq n_{0}.$ 
Recall that $\Omega _{n}=\{x<\psi _{n}(y)\}$ so that we have 
\begin{equation*}
\psi _{n}(y)\geq \eta, \text{ for all $n\geq n_{0}$ and $y\in B(\overline{y},\varepsilon _{1})$}.
\end{equation*}%
Using in addition \eqref{Cvzn} and Claim \ref{claim5}, we note in particular that $%
\overline{y}\neq \overline{z}.$ This enables us to choose $\varepsilon _{0}$
and $\varepsilon _{1}$ (smaller if needed) so that $B(\overline{z}%
,\varepsilon _{0})\cap B(\overline{y},\varepsilon _{1})=\emptyset .$
We can now define $\mathcal{O}$ (resp. $\tilde{\mathcal{O}})$
as follows:

\begin{equation*}
\mathcal{O} \text{ (resp. $\tilde{\mathcal{O}}$)}
:=
\left\{(x,y); \, 
x<\chi (y)\text{ (resp. $x<\tilde{\chi }(y)$)}\right\},
\end{equation*}%
where%
\begin{equation*}
\chi (y) \text{ (resp. $\tilde{\chi }(y)$)}
:=
\begin{cases}
\overline{\psi }(y) \text{ (resp. $+\infty$)} & \text{if }y\in B(\overline{z}%
,\varepsilon _{0}), \\ 
\eta & \text{if }y\in B(\overline{y},\varepsilon _{1}), \\ 
0 & \text{otherwise.}%
\end{cases}
\end{equation*}
 As previously announced, we need to identify both the PDE and the boundary condition of the limiting temperature. We propose

\begin{claim}
There exists  $0 \leq \tilde{u} \in C^1(\tilde{\mathcal{O}}) \cap C^2(\mathcal{O})$  such
that%
\begin{equation*}
\tilde{u}_{n}\rightarrow \tilde{u}\text{ locally uniformly in }%
\tilde{\mathcal{O}}\text{ as }n\rightarrow \infty ,
\end{equation*}%
up to a subsequence,  where 
\begin{equation}
\begin{cases}
\overline{c}\,\tilde{u}_{x}-\triangle \tilde{u}=0,\text{ in }%
\mathcal{O}, \\ 
\frac{\partial \tilde{u}}{\partial \nu }=\frac{\overline{c\,}}{\sqrt{1+%
\overline{\psi }_{y}^{2}}},\text{ }x=\overline{\psi }(y),\text{ }y\in B(%
\overline{z},\varepsilon _{0}). 
%\\ 
%\tilde{u}(x,y)\rightarrow 0,\text{ as }x\rightarrow -\infty .%
\end{cases}\label{Lemme8prime}
\end{equation}
\end{claim}
\noindent To show this claim, apply again Lemma \ref{lemmaA1} with now $
 B_0 
 := 
B(\overline{z},\varepsilon _{0}),$ $\eta $
as above and $
 B_1 
 := 
B(\overline{y},
 \varepsilon _{1} 
),$ so \eqref{A2}  is  
ensured by Claim \ref{claim5} and \eqref{A3}  holds  by what precedes. 
% Note  that now $U=\mathcal{O}$ and $\tilde{U}=\tilde{\mathcal{O}}.$ 

\medskip

 Once again 
$u_{n}(0,y_{n})\rightarrow \tilde{u}(0,\overline{y})=\alpha$
 and it remains to  show that $\tilde{u}(0,\overline{y})>0.$  For this sake, note that $\tilde{u}$ is not 
identically zero because $\overline{c}>0$ in the
boundary condition of \eqref{Lemme8prime}. By the strong  maximum principle, 
$\tilde{u}(0,\overline{y})>0$ since it cannot
achieve its minimum in  the connected open  $\mathcal{O}$ otherwise it will vanish
everywhere in $\mathcal{O}$, cf.  \cite[Sec. 6.4.2]{Evans}.  This completes
the proof of Lemma \ref{lemma5}.
\end{proof}

\section{Proof of Theorem \ref{mainthm}}\label{section5}

Now we are ready to prove our main result. 
%We will essentially call for
%results of \cite{IFB} for bounded fronts since $\psi _{n}$ will remain
%uniformly bounded by the previous analysis. 
 For this sake, let us consider Eqns. \eqref{Eqn_laplacien}-\eqref{eqn_front_laplacien}
with $K(u)$ replaced by the truncated function 
\begin{equation*}
K_{n}(s)
:=
\max \left\{K(s),\frac{1}{n}\right\}.
\end{equation*}%
By \cite[Thm. 3.2]{IFB}, the latter system
admits a solution $(c_{n},\psi _{n},u_{n})$ for any  integer $n \geq 1$,  with $%
c_{n}>0,$ $\psi _{n}\in W^{2,\infty }(\mathbb{T})$ and  $0 \leq u_{n}\in
H^{1}(\{x<\psi_n(y)\})$. Since $\psi _{n}$ is defined up
to an additive constant,  we can choose it such that  $\min_{y}\psi _{n}(y)=0.$ Now from Lemma \ref{lemma1}, we know that 
\begin{equation*}
c_{n}\geq c_{0} 
:= 
R_{m}\int_{0}^{1}K(s) \, ds>0\text{ for all }n.
\end{equation*}%
Likewise we have the following upper bound by  
\cite[Thm. 2.1]{ChNa97}  and Assumption \eqref{HypsurRK}:  
\begin{equation*}
c_{n}\leq c_{M}
:=R_M
 K_M 
\text{ for all }n.
\end{equation*}%
From the front's equation \eqref{eqn_front_laplacien}, we then have 
\begin{equation*}
| (\psi _{n})_{yy}| \leq 2 c_{M} (1+\psi _{ny}^{2})^{%
\frac{3}{2}}.
\end{equation*}%
 Hence, applying Lemma \ref{lemma5} with $G(h)
:=
2c_{M}(1+h^{2})^{\frac{3}{2}}$   leads to the existence of an $\alpha >0,$ $\alpha $ depending only
on $c_{0},$ $c_{M}$ and $G,$ such that 
\begin{equation*}
\min_{
 y 
}u_{n}(0,y)\geq \alpha \text{ for all }n.
\end{equation*}%
Now as $x\longmapsto \min_{y}u(x,y)$ is  nondecreasing  by Lemma \ref{lemma4}, we  have
%\begin{equation*}
$u_{n}(\psi _{n}(y),y)\geq \alpha,$ 
%,\text{ for all }n 
% \text{ and } y \in \mathbb{T}. 
%\end{equation*}
and consequently by \eqref{HypsurRK} and the definition of $K_{n},$ we
 end up with 
\begin{equation*}
K_{n}(u_{n}(\psi _{n}(y),y))\geq K_{n}(\alpha )\geq K(\alpha )\text{
for all }n
 \text{ and } y \in \mathbb{T}. 
\end{equation*}%
We proceed by setting 
\begin{equation*}
H_{n}(y)
:=
R(y) K_{n}(u_{n}(\psi _{n}(y),y)).
\end{equation*}%
 Since $%
K(\alpha )>0$ by \eqref{HypsurRK}, the above uniform positive lower boundedness of $(H_{n})_{n}$  
enables us to use the results of \cite%
{IFB} to pass to the limit  as  $n \rightarrow \infty $. Indeed an application
of  \cite[Lemmas  
2.5 \& 2.7]{IFB}  gives the existence   of $(H,c,\psi,u)$ limit of $(H_{n},c_{n},\psi
_{n},u_{n})$, 
 up to a subsequence,  where  
\begin{equation*}
\begin{cases}
c\,u_{x}-\triangle u=0, & \quad \text{in }\{x<\psi (y)\}, \\[1ex] 
\frac{\partial u}{\partial \nu }=\frac{c\,}{\sqrt{1+\psi _{y}^{2}}}, & 
\quad x=\psi (y), \\[1ex] 
u(x,y)\rightarrow 0, & \quad \mbox{as~$x\rightarrow -\infty$,}%
\end{cases}%
\end{equation*}%
and 
\begin{equation*}
-c+H(y)\sqrt{1+\psi _{y}^{2}}=\,\frac{\psi _{yy}}{1+\psi _{y}^{2}},\quad
x=\psi (y),
\end{equation*}%
 in the sense and with the regularity claimed in Theorem \ref{mainthm}.  It remains to identify $H(y).$ As  $H_{n}\rightharpoonup H$  in $L^{\infty }$ $%
weak-\ast $ (see \cite{IFB})$,$ we have%
\begin{equation*}
\int_{\mathbb{T}}H_{n}(y)\varphi (y) \, dy\rightarrow \int_{\mathbb{T}%
}H(y)\varphi (y) \, dy\text{ for all }\varphi \in C_{c}^{\infty }(\mathbb{T)}%
\text{.}
\end{equation*}%
 But by \cite[Lemma 2.8]{IFB},  we have 
\begin{equation*}
u_{n}(\psi _{n}(y),y)\rightarrow u(\psi (y),y)\text{ for  {\it a.e.} }%
y\in \mathbb{T}\text{,}
\end{equation*}%
so that, by the  definition of $H_{n}$, the assumptions in \eqref{HypsurRK},  and Lebesgue's theorem, we also have%
\begin{equation*}
\int_{\mathbb{T}}H_{n}(y)\varphi (y) \, dy\rightarrow \int_{\mathbb{T}%
}R(y)K(u(\psi (y),y))\varphi (y) \, dy\text{ for all }\varphi \in
C_{c}^{\infty }(\mathbb{T)}\text{.}
\end{equation*}%
Finally we end up with $H(y)=R(y) K(u(\psi (y),y))$  and the proof of Theorem \ref{mainthm} is complete. \qed  

\appendix

\section{A convergence result for the temperature}\label{appA}

Let us now state and prove Lemma \ref{lemmaA1}, see below, that we have admitted during the proof of Lemma \ref{lemma5}. As before, consider $(c_{n})_n$ in $\R,$ $(\psi _{n})_n$ in $W^{2,\infty }(%
\mathbb{T})$ and $(u_{n})_n$ in $H^1(\Omega_n=\{x<\psi
_{n}(y)\})$ such that 
\begin{equation}\label{A1}
\begin{cases}
c_{n}\,(u_{n})_{x}-\triangle u_{n}=0, & \quad \text{in }\Omega _{n}\\[1ex] 
\frac{\partial u_{n}}{\partial \nu }=\frac{c_{n}}{\sqrt{1+\psi _{ny}^{2}}},
& \quad x=\psi _{n}(y), \\[1ex] 
u_{n}(x,y)\rightarrow 0, & \quad \mbox{as~$x\rightarrow -\infty$},%
\end{cases}
%\tag{A1}
\end{equation}
as well as
\begin{equation}\label{A2}
%\tag{A2}
\begin{cases}
0<c_n \leq c_M \mbox{ and } c_n \to \overline{c}>0 \mbox{ as } n \to  \infty,\\
(\psi _{n})_{n}\text{ is bounded in }W^{2,\infty }(B_0), \text{ and} 
 \\
\psi _{n}\rightarrow \overline{\psi }\text{ uniformly in }B_0\text{ as }%
n\rightarrow \infty ,  
\end{cases}
\end{equation}
for some reals $c_M$ and $\overline{c}$, nonempty open $B_0\subset \mathbb{T}$ and $\overline{\psi }\in
W^{2,\infty }(B_0).$ 
Let moreover $\eta > 0$ and $B_1\subset \mathbb{T}$
be another open, eventually empty, such that 
\begin{equation}\label{A3}
B_0\cap B_1 = \emptyset \text{ and } (-\eta,\eta) \times B_1\subset \cap _{n}\Omega _{n}.  
%\tag{A3}
\end{equation}

\noindent  Define then 
\begin{equation*}
U_{n}
:=
\{(x,y); \, x<\chi _{n}(y)\},
\end{equation*}

\noindent where 
\begin{equation*}
\chi _{n}(y) 
:=
\begin{cases}
\psi _{n}(y) & \text{if }y\in B_0, \\ 
\eta & \text{if }y\in B_1, \\ 
0 & \text{otherwise.}
\end{cases}
\end{equation*}

\noindent  Likewise we define the limiting domain 
\begin{equation*}
U
:=
\{(x,y); \, x<\chi (y)\},
\end{equation*}
\noindent  with $\chi $ the same as $\chi _{n}$ with $\psi _{n}$ replaced by $\overline{%
\psi }.$
We further consider the extended domain 
\begin{equation*}
\tilde{U}
:=
\left\{(x,y); \, x<\tilde{\chi }(y)\right\},
\end{equation*}
\noindent where 
\begin{equation*}
\tilde{\chi }(y)
:=
\begin{cases}
+\infty  & \text{if }y\in B_0, \\ 
\eta & \text{if }y\in B_1, \\ 
0 & \text{otherwise.}%
\end{cases}
\end{equation*}
Recall that $\tilde{u}_{n}$ is defined in \eqref{Extensionun} and  belongs at least to $C\cap H^{1}(
\mathbb{R} \times \mathbb{T}
)$ because   $u_{n}\in C\cap H^{1}(\Omega _{n})$ and $\psi _{n}\in
W^{1,\infty }(\mathbb{T}).$ 
%\begin{equation}\label{Extensionun}
%\tilde{u}_{n}(x,y){:=}
%\begin{cases}
%u_{n}(x,y) & \text{if }(x,y)\in \Omega _{n}, \\ 
%u_{n}(2\psi _{n}(y)-x,y) & \text{otherwise.}%
%\end{cases}
%\end{equation}

\begin{lemma}\label{lemmaA1}
Under the above assumptions, there is $0 \leq \tilde{u}$ $\in
C^{1}(\tilde{U}) \cap C^2(U)$ such that%
\begin{equation*}
\tilde{u}_{n}\rightarrow \tilde{u}\text{ locally uniformly in }%
\tilde{U}\text{ as }n\rightarrow \infty ,
\end{equation*}%
up to a subsequence, where 
\begin{equation}\label{A5}
\begin{cases}
\overline{c}\,\tilde{u}_{x}-\triangle \tilde{u}=0,\text{ in }U, \\ 
\frac{\partial \tilde{u}}{\partial \nu }=\frac{\overline{c\,}}{\sqrt{1+%
\overline{\psi }_{y}^{2}}},\text{ }x=\overline{\psi }(y),\text{ }y\in B_0. 
%\tilde{u}(x,y)\rightarrow 0,\text{ as }x\rightarrow -\infty .%
\end{cases}
\end{equation}
\end{lemma}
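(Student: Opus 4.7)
The strategy is to obtain uniform local elliptic estimates on $(u_n)_n$, extract a subsequence converging to a limit $\overline u$ solving the restricted problem in $\overline U$, and then extend the convergence to the reflection region $\tilde U\setminus\overline U$ via the explicit formula defining $\tilde u_n$. As a preliminary step I would first derive a uniform local $L^\infty$-bound on $(u_n)_n$ by standard barrier arguments for the linear convection-diffusion equation $c_n u_{n,x}-\Delta u_n=0$ with $c_n\in[c_0,c_M]$, decay at $x=-\infty$, and Neumann datum bounded by $c_M$ (of the type used in \cite{IFB}); the maximum principle together with positivity of the Neumann datum and vanishing at $-\infty$ also yields $u_n\geq 0$, hence $\tilde u_n\geq 0$.

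The core step is to upgrade this to uniform Schauder bounds. Exploiting the uniform $W^{2,\infty}(B_0)$ control from \eqref{A2}, the flattening $(x,y)\mapsto(x-\psi_n(y),y)$ transforms \eqref{A1} locally for $y\in B_0$ into a uniformly elliptic problem on a fixed half-space with $W^{1,\infty}$ coefficients and Lipschitz Neumann datum. Classical up-to-the-boundary Schauder estimates then give uniform $C^{1,\alpha}$ bounds on $u_n$ near $\{x=\psi_n(y),\,y\in B_0'\}$ for any open $B_0'$ compactly contained in $B_0$, while interior Schauder provides uniform $C^{2,\alpha}_{\loc}$ bounds on compact subsets of $U$ and of the tube $(-\eta,\eta)\times B_1\subset\Omega_n$ from \eqref{A3}. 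Ascoli--Arzel\`a, together with these bounds and $\psi_n\to\overline\psi$ uniformly in $B_0$, yields a subsequence for which $\psi_n\to\overline\psi$ in $C^1(B_0')$, $u_n\to\overline u$ in $C^2_{\loc}(U)$, and $u_n\to\overline u$ in $C^1$ up to $\{x=\overline\psi(y),\,y\in B_0'\}$. Combining with $c_n\to\overline c$, one passes to the limit in the interior PDE and in the Neumann condition to obtain \eqref{A5} on $U$.

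To conclude, I would set $\tilde u:=\overline u$ on $\overline U$ and $\tilde u(x,y):=\overline u(2\overline\psi(y)-x,y)$ for $y\in B_0$ with $x>\overline\psi(y)$, from which the claimed regularity of $\tilde u$ (understood piecewise across the moving interface) follows. Local uniform convergence $\tilde u_n\to\tilde u$ on $\tilde U$ is then verified in three zones: on $\{y\in B_0\}$, the $C^1$ convergence of $u_n$ and the uniform convergence $\psi_n\to\overline\psi$ yield, via the explicit reflection formula and uniform continuity, the limit on both sides of $\{x=\overline\psi(y)\}$; on the tube $(-\eta,\eta)\times B_1\subset\Omega_n$ where $\tilde u_n=u_n$, convergence follows from interior estimates; and on the remaining part of $\tilde U$ (where $\tilde\chi(y)=0$), the inclusion $\psi_n\geq 0$ places these points in $\Omega_n$ so that $\tilde u_n=u_n$, and interior estimates again conclude. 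The main technical difficulty lies in the uniform up-to-the-boundary $C^{1,\alpha}$ estimate for $u_n$ over $\{y\in B_0\}$: it relies crucially on the $W^{2,\infty}(B_0)$ (rather than merely $W^{1,\infty}$) control of $\psi_n$ to produce Lipschitz coefficients after flattening, and this is precisely what allows both the passage to the limit in the Neumann condition of \eqref{A5} and the control of the reflection extension across the moving interface.
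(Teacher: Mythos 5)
Your proposal is essentially correct, but it takes a genuinely different route from the paper's in the decisive step, namely the local uniform convergence of $\tilde{u}_n$ across the moving interface. The paper works in two soft stages: it first identifies the limit problem by passing to the limit \emph{weakly} in the variational formulation (only $L^\infty$ and $H^1$ bounds are needed, obtained from the barrier $e^{c_n x}$ and an energy identity), and it then obtains local uniform convergence in all of $\tilde{U}$ by viewing the reflected functions $\tilde{u}_n$ as solutions of a single divergence-form equation $b_n(\tilde u_n)_x-\Div(A_n\nabla\tilde u_n)=-(b_n)_x$ on $\R\times\mathbb{T}$ with uniformly elliptic, merely bounded measurable coefficients, so that the De Giorgi--Nash--Moser estimate \cite[Thm.~8.24]{GiTr01} gives local equi-H\"older continuity; notably this step uses only the $W^{1,\infty}$ bound on $\psi_n$. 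The $C^1(\tilde U)\cap C^2(U)$ regularity is then established for the \emph{limit only}, by a $W^{2,p}$ bootstrap on smoothed localized domains. You instead prove \emph{uniform} up-to-the-boundary $C^{1,\alpha}$ estimates for each $u_n$ by flattening the interface, pass to the limit in the classical (pointwise) formulation of the PDE and of the Neumann condition, and transfer the convergence to the reflected region by hand through the explicit reflection formula. Your route yields a stronger mode of convergence ($C^1$ up to the interface from inside) but leans on heavier machinery applied uniformly along the sequence and uses the full $W^{2,\infty}(B_0)$ bound already at the convergence stage, whereas the paper's argument is softer and degrades gracefully to rougher interfaces.

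Two small caveats. First, after the flattening $(x,y)\mapsto(x-\psi_n(y),y)$ the principal coefficients are indeed uniformly Lipschitz, but the transformed equation also contains the first-order term $\psi_{nyy}\,\partial_s v_n$ whose coefficient is only uniformly $L^\infty$; this rules out genuine Schauder ($C^{2,\alpha}$) theory at the boundary, but uniform $W^{2,p}$ estimates for the conormal problem on uniformly $C^{1,1}$ domains (absorbing that term into the right-hand side) still give the uniform $C^{1,\alpha}$ bounds you need, so the gap is cosmetic. Second, your treatment of the region $\{y\notin B_0\}$ implicitly uses $\tilde U\cap\{y\notin B_0\}\subset\Omega_n$ (i.e.\ $\psi_n\geq 0$ outside $B_0\cup B_1$ and $\psi_n\geq\eta$ on $B_1$); the paper makes the same use of \eqref{A3}, so this is consistent, but it is worth stating explicitly since compact subsets of $\tilde U$ must be split into a part uniformly interior to all $\Omega_n$ and a part projecting into a compact subset of $B_0$ for your localized boundary estimates to cover everything.
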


\begin{proof} We proceed in several steps.

\bigskip

{\bf Step 1:} {\it A priori estimates.}

\smallskip

\noindent  We claim that 
\begin{equation}
\begin{cases}
0\leq u_{n}(x,y)\leq e^{c_n x} \quad \forall (x,y) \in \Omega_n\\
\text{and } \int_{\Omega
_{n}}| \nabla u_{n}|^{2}\leq c_{n}.
\end{cases}
\label{EstimationsurU}
\end{equation}

\noindent The first estimate follows since the functions $0$ and $e^{c_{n}x}$ are respectively sub and supersolution of \eqref{A1}, see \textit{e.g.} \cite[Appendix A.2]{IFB}.
For the second estimate, take $w:=u_{n}$ in \eqref%
{weaksolution}, which is a weak formulation of \eqref{A1} as well, to see that
\begin{equation*}
c_{n}\int_{\Omega } \left(u_{n}^{2}/2\right)_{x}+\int_{\Omega }| \nabla
u_{n}|^{2}=\int\limits_{x=\psi_(y)}u_{n}\frac{c_{n}}{\sqrt{%
1+(\psi _{n}^{\prime })^{2}}} \, dy=c_{n}
\end{equation*}
\noindent by Lemma \ref{lemma2}. The proof of \eqref{EstimationsurU} is complete
since the first term is equal to $c_{n}\int_{\mathbb{T}} u_{n}^{2}(\psi _{n}(y),y)/2 \, dy\geq 0.$ 

\noindent Consequently, there is a constant $C$ independent of $n$ such that
\begin{equation}
\max_{\tilde{U}} |\tilde{u}_{n}| \leq C 
\text{ and } \int_{\tilde{U}} |\nabla \tilde u_n|^2 \leq C,
\label{EstimationsurUtilde}
\end{equation}
thanks to \eqref{Extensionun} and \eqref{A2}. Note that $\varphi_n$ is not assumed uniformly bounded outside $B_0$, but in that region we use that $\tilde{U} \cap \{y \notin B_0\} \subset  \Omega_n$ to deduce \eqref{EstimationsurUtilde} from \eqref{EstimationsurU}.

\bigskip

{\bf Step 2:} {\it Limiting problem.}

\smallskip

\noindent By \eqref{EstimationsurUtilde}, $u_n$ converges weakly  to some $\tilde u$ in $H_{\rm loc}^1(\tilde{U})$ up to a subsequence. Taking any test $\varphi \in C^\infty_c(\tilde{U})$ in \eqref{A1}, we have

\begin{equation}\label{apptech}
c_n \int_{\Omega_n} (u_n)_{x} \varphi+\int_{\Omega_n}\nabla u_n\nabla \varphi=c_n \int\limits_{B_0} \varphi(\psi_n(y),y) \, dy.
\end{equation}
The LHS can be rewritten as
$$
\int_{U \cap \{y \notin B_0\}} \left(c_n \, (u_n)_{x} \varphi+\nabla u_n\nabla \varphi\right)+\int_{\tilde{U} \cap \{y \in B_0\}} \left(c_n \, (u_n)_{x} \varphi+\nabla u_n\nabla \varphi\right) \mathbf{1}_{x<\psi_n(y)},
$$
 since $supp \, \varphi \cap \{y \notin B_0\} \subset U \cap \{y \notin B_0\}
$. 
This converges to
$$
\int_{U \cap \{y \notin B_0\}} \left(\overline{c} \, \tilde{u}_{x} \varphi+\nabla \tilde{u} \nabla \varphi\right)+\int_{\tilde{U} \cap \{y \in B_0\}} \left(\overline{c} \, \tilde{u}_{x} \varphi+\nabla \tilde{u} \nabla \varphi\right) \mathbf{1}_{x<\overline{\psi}(y)},
$$
because $\mathbf{1}_{x<\psi_n(y)} \to \mathbf{1}_{x<\overline{\psi}(y)}$ {\it a.e.} in $\tilde{U} \cap \{y \in B_0\}$ by the uniform convergence of $\psi_n$  to $\overline{\psi}$ in $B_0$, cf. \eqref{A2}. Noting that 
$$
\tilde{U} \cap \{x<\overline{\psi}(y), \, y \in B_0\}=U \cap \{y \in B_0\}, 
$$
the LHS of \eqref{apptech} thus converges to
$
\int_{U} \left(\overline{c} \, \tilde{u}_{x} \varphi+\nabla \tilde{u} \nabla \varphi\right).
$
Using again the uniform convergence of $\psi_n$ to $\overline{\psi}$ in $B_0$ to handle the RHS, we obtain that
$$
\int_{U} \left(\overline{c} \, \tilde{u}_{x} \varphi+\nabla \tilde{u} \nabla \varphi\right)=\overline{c} \int\limits_{B_0} \varphi(\overline{\psi}(y),y) \, dy \quad \forall \varphi \in C^\infty_c(\tilde{U}).
$$
This proves that $\tilde{u} \in H_{\rm loc}^1(\tilde{U})$ is a variational solution of \eqref{A5}. Note that it is nonnegative by \eqref{EstimationsurU}.

\bigskip

{\bf Step 3:} {\it Local uniform convergence.}

\smallskip

\noindent When calling for Lemma \ref{lemmaA1}, it was important that $\tilde{u}_n \to \tilde{u}$ locally uniformly in $\tilde{U}$, especially around the moving interfaces $\{x=\psi_n(y), \, y \in B_0\}$. Let us adapt an idea of \cite{Nit11} to establish this convergence. It consists in considering the problem satisfied by $\tilde{u}_n$ in order to apply standard interior elliptic estimates, cf. also \cite[Appendix A.4]{IFB}. 

\noindent By \cite[Lemma A.5]{IFB}, 
\begin{equation*}
b_{n} \, (\tilde{u}_{n})_{x}-\textrm{div}(A_{n}\nabla \tilde{u}_{n})=-(b_{n})_{x}%
\text{ in }\mathcal{D}'(\R \times \mathbb{T}),
\end{equation*}%
with
\begin{equation*}
b_{n}(x,y)=c_{n}\mathbf{1}_{\{x<\psi _{n}(y)\}}-c_{n}\mathbf{1}_{\{x>\psi
_{n}(y)\}},
\end{equation*}%
\begin{equation*}
A_{n}(x,y)=\left( 
\begin{array}{cc}
1 & 0 \\ 
0 & 1%
\end{array}%
\right) \mathbf{1}_{\{x<\psi _{n}(y)\}}-\left( 
\begin{array}{cc}
1+4\,\psi _{n y}^{2} & 2\,\psi _{n y} \\ 
2\,\psi _{n y} & 1%
\end{array}%
\right) \mathbf{1}_{\{x>\psi _{n}(y)\}}.
\end{equation*}%
We claim that there are $\Lambda \geq \lambda >0$, $\nu \geq 0$, such that for each $n$ and {\it a.e.} $x,y\in
\tilde{U},$%
\begin{equation*}
\Lambda \geq A_{n}(x,y)\geq \lambda \text{ and } \lambda
^{-1}\,|b_{n}(x,y)|\leq \nu.
\end{equation*}%
For $y \notin B_0$, this follows once again from the fact $\tilde{U} \cap \{y \notin B_0\} \subset \Omega_n$, so 
%$(b_n)_x$ is zero in that region.
 $b_n=c_n$ and $A_n=Id$ everywhere in that region.  
For $y \in B_0$, recall that $(\psi_n)_n$ is bounded in $W^{1,\infty}(B_0)$ and the proof is the same as in \cite[Lemma A.7]{IFB}.

\noindent Now by \cite[Thm. 8.24]{GiTr01} combined with 
\eqref{EstimationsurUtilde}, $(\tilde{u}_n)_n$ is locally equi-H\"older continuous in $\tilde{U}$ and the convergence of $\tilde{u}_n$ to $\tilde{u}$ is local uniform, up to taking another subsequence if necessary. 

\bigskip

{\bf Step 4:} {\it $C^1$ regularity.}

\smallskip

\noindent This step is now standard. For example, let us call for \cite[Thm. 2.4.2.7]{Grisvard} to get the regularity around the boundary $\{x=\overline{\psi}(y), \, y \in B_0\}$.  For any arbitrary $\rho \in C^\infty_c(\tilde{U})$, consider a nonempty $C^{1,1}$ open $V$ such that $\overline{U} \cap supp \, \rho \subset \overline{V} \subset \overline{U}$ and rewrite \eqref{A5} for the function $v:=\tilde{u} \rho$ as follows:
\begin{equation*}
\begin{cases}
v-\triangle v=f \text{ in }V, \\ 
\frac{\partial v}{\partial \nu_V}=g\text{ on }
\partial V,
\end{cases}
\end{equation*}
with $f=
\tilde{u} \rho-c \rho \tilde{u}_x-2 \nabla \tilde u \nabla \rho-\tilde{u} \triangle \rho$ and $$
g=
\begin{cases}
\frac{\overline{c} \, \rho}{\sqrt{1+\overline{\psi}_y^2}}+\tilde{u} \frac{\partial \rho}{\partial \nu}  & \mbox{on } \partial V \cap \{x=\overline{\psi}(y), \, y \in B_0\},\\
0 & \mbox{elsewhere on $\partial V$,}
\end{cases}
$$
where $\nu_V$ is the outer unit normal of $\partial V$.
% which coincides with $\nu$ in the first above case. 
Such a regular open $V$ exists because $supp \, \rho \subset \tilde{U}$ and $\overline{\psi} \in W^{2,\infty}(B_0)$. Using then that $\tilde{u} \in L^\infty \cap H^1(\tilde{U})$, we have $f \in L^2(V)$ and $g \in H^{1/2}(\partial V)$. It follows that $v \in H^2(V)$ by \cite[Thm. 2.4.2.7]{Grisvard}.
Since $\rho$ is arbitrary, $\tilde{u} \in H_{\rm loc}^2(U)$ thus a fortiori in $H_{\rm loc}^2(\tilde{U})$ because $\overline{\psi} \in W^{2,\infty}(B_0)$.
By Sobolev's embeddings \cite[Cor. IX.14]{Bre83}, $\nabla \tilde{u} \in L^p_{\rm loc}(\tilde{U})$ for any $p \in [2,\infty)$, so $f \in L^p(V)$ and $g \in W^{1-1/p,p}(\partial V)$ thanks to trace results \cite[Thm. 1.5.1.3]{Grisvard}. Here we used that $\tilde{u} \in L^\infty \cap W^{1,p}_{\rm loc}(\tilde{U})$. Applying again \cite[Thm. 2.4.2.7]{Grisvard}, $v \in W^{2,p}(V)$, $\tilde{u} \in W^{2,p}_{\rm loc}(\tilde{U})$, and by \cite[Cor. IX.14]{Bre83}, $\tilde{u} \in C^1(\tilde{U})$. To get the $C^2$ regularity in $U$, use \textit{e.g.} \cite[Rem. 26]{Bre83}.
\end{proof}

\section{Technical features}\label{appmissing}

Here is another result used in the proofs.

\begin{lemma}\label{lemma7}
Let $G \in C(\R)$ and $R>0$. 
There exists $r>0$ such that for any $h\in W^{1,\infty }(\mathbb{T})$ and $y_0 \in \mathbb{T}$ such that $| h_{y}| \leq G(h)$ {\it a.e.} in $\mathbb{T}$ and $|h(y_{0})|\leq R,$ we have
\begin{equation*}
| h(y)| \leq 2R \quad \forall y \in B(y_{0},r).
\end{equation*}
\end{lemma}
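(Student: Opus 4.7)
The plan is a one-dimensional first-exit argument that exploits the boundedness of $G$ on compact sets. Set
\begin{equation*}
M := \max_{|s| \leq 2R} G(s),
\end{equation*}
which is finite by continuity of $G$ and depends only on $G$ and $R$ (and is nonnegative wherever the hypothesis $|h_y| \leq G(h)$ is actually attained). I then take $r := R/(M+1)$, which depends only on $G$ and $R$.

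Given $h$ and $y_0$ satisfying the hypotheses, fix $y \in B(y_0,r)$ and consider the shorter arc from $y_0$ to $y$ in $\mathbb{T}$, identified with an interval of length strictly less than $r$ in $\R$; by symmetry, write this interval as $[y_0,y]$ (the other orientation is handled identically). Since $h$ is continuous (being Lipschitz) and $|h(y_0)| \leq R < 2R$, the set
\begin{equation*}
E := \left\{z \in [y_0,y] : |h| \leq 2R \text{ on } [y_0,z]\right\}
\end{equation*}
is nonempty and closed in $[y_0,y]$; let $y_* := \sup E$. On $[y_0,y_*]$ the differential inequality and the definition of $M$ give $|h_z| \leq G(h) \leq M$ \emph{a.e.}, so integrating yields
\begin{equation*}
|h(y_*) - h(y_0)| \leq M(y_* - y_0) \leq Mr < R,
\end{equation*}
and hence $|h(y_*)| < 2R$. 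If we had $y_* < y$, continuity of $h$ would extend the strict inequality $|h| < 2R$ to a right neighborhood of $y_*$, contradicting the maximality of $y_*$. Therefore $y_* = y$ and $|h(y)| \leq 2R$, as claimed.

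I do not anticipate any substantive obstacle: the argument is essentially a scalar Gronwall/first-exit bound. The only mild technicality is to make the ``first-exit'' reasoning rigorous in the $W^{1,\infty}$ framework, but this is immediate since $h$ is continuous and the essential bound $|h_z| \leq M$ on the subinterval $[y_0,y_*] \subset \{|h| \leq 2R\}$ is enough for the elementary integration step above.
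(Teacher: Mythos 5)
Your proof is correct and follows essentially the same route as the paper's: bound $G$ on $[-2R,2R]$, choose $r$ proportional to $R$ divided by that bound, and run a continuity/first-exit (bootstrap) argument; the paper phrases the exit in terms of a maximal radius $r_0$ rather than a first-exit point $y_*$ on the arc, which is only a cosmetic difference. The sole nitpick is that $M:=\max_{|s|\le 2R}G(s)$ could be negative when no admissible $h$ exists (the paper guards against this by taking $\max\{\max_{[-2R,2R]}|G|,1\}$), but in that vacuous case any $r>0$ works, so nothing is lost.
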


\begin{proof}
Set
$
C_{2R}
:=
\max \{ \max_{[-2R,2R]} | G| ,1\} 
$, let $h=h(y)$ and $y_{0}$ be as in the lemma, and define 
$$
r _{0}
:=
\sup \{r>0; \, \sup_{B(y_0,r)} |h| \leq 2R\}.
$$
The above set is not empty because $|h(y_0)| \leq R$ and $h$ is continuous, so $r_0$ is well-defined in $(0,+\infty]$.
We claim that $r _{0}\geq R/C_{2R}$ and this will show that any $r\leq R/C_{2R}$ fits the
lemma. If indeed $r _{0}< R/C_{2R}$  then for
all $y \in B(y_{0},r_{0}),$ 
\begin{equation*}
|h(y)| \leq |h(y_{0})| +r
_{0}| h_{y}| _{L^{\infty }(B(y_0,r_0))} 
\leq R+r_{0}C_{2R} 
<2R,
\end{equation*}
but one can then choose $r _{1}>r _{0}$
such that  $\sup_{B(y_0,r_1)} |
h(y)| \leq 2R$ and this contradicts the definition of $r _{0}.$ 
\end{proof}

\noindent It only remains to check that \eqref{missing} holds, cf. Remark \ref{remark3}.

\begin{proof}[Proof of \eqref{missing}] 
The domain $\Omega_0$ is not Lipschitz at $(x_0,y_0) \in \partial \Omega_0$ whenever $\psi(y_0)=x_0$ and $\psi_y(y_0)=0$.
But the variational solution $u \in H^1(\Omega)$ of \eqref{Eqn_laplacien} is in $C^1(\overline{\Omega}) \cap C^2(\Omega)$ by standard elliptic regularity; cf. the fourth  step of the proof of Lemma \ref{lemmaA1}. The strong maximum principle \cite[Sec. 6.4.2]{Evans} then implies that $\min_{\overline{\Omega}_0} (u-w_0)$ is not achieved in $\Omega_0$ or on $\partial \Omega_0 \cap \{x>x_0\}$ where $\frac{\partial (u-w_0)}{\partial \nu}>0$ pointwise. It is then achieved on $\partial \Omega_0 \cap \{x=x_0\}$ where $u \geq w_0$ pointwise, including at $(x_0,y_0)$ such as above.
\end{proof}

\end{document}